\title{Novel Results of Two Generalized Classes of Fibonacci and Lucas Polynomials and Their Uses in the Reduction of Some Radicals}
\author{W.M. Abd-Elhameed 1\thanks{walee$_{-}$9@yahoo.com} }
\author{N.A. Zeyada 1\thanks{nzeyada@gmail.com}}
\author{A.N. Philippou 2\thanks{professoranphilippou@gmail.com}}
\affil{\small{1 Department of Mathematics, Faculty of Science, Cairo
University, Giza 12613, Egypt}} \affil{\small{2 Department of
Mathematics, University of Patras, Patras 26504, Greece}}
 \date{}
\newcommand{\p }{\\[-0.7cm]}
\newcommand{\dd }{\displaystyle}
\newcommand{\bq }{\begin{equation}}
\newcommand{\eq }{\end{equation}}
\theoremstyle{plain}
\newtheorem{thm}{Theorem}
\newtheorem{lem}{Lemma}
\newtheorem{cor}{Corollary}
\newtheorem{rem}{Remark}
\newtheorem{exa}{Example}
\begin{document}
\maketitle
\begin{abstract}
This paper is concerned with developing some new connection formulae
between two generalized classes of Fibonacci and Lucas polynomials.
All the connection coefficients involve hypergeometric functions of
the type $_2F_{1}(z)$, for certain $z$. Several new connection
formulae between some famous polynomials such as Fibonacci, Lucas,
Pell, Fermat, Pell-Lucas, and Fermat-Lucas polynomials are deduced
as special cases of the derived connection formulae. Some of the
introduced formulae generalize some of those existing in the literature.
As two applications of the derived connection formulae, some new
formulae linking some celebrated numbers are given and also some
newly closed formulae of certain definite weighted integrals are
deduced. Based on using the two generalized classes of Fibonacci and
Lucas polynomials, some new reduction formulae of certain odd and
even radicals are developed.
\end{abstract}
\smallskip

\noindent {\bf Keywords:} Fibonacci polynomials, Lucas polynomials, connection coefficients, hypergeometric functions, Chu-Vandermonde identity, radicals

\smallskip
\noindent {\bf AMS subject classifications:} 65L05, 42C15, 11B39.

\section{Introduction}
 The Fibonacci and Lucas sequences are of the most important
number sequences. These numbers and their corresponding polynomials
can be generated by recurrence relations each of degree two. In
fact, these sequences of polynomials and numbers are crucial in
different branches such as number theory, combinatorics, numerical
analysis, probability, biology and physics, so investigations of
these sequences attracted the attention of many mathematicians and
scientists. From a theoretical point of view, there are several
studies concerning Fibonacci and Lucas sequences and their
generalizations. Some of these studies can be found in
\cite{zhang2002chebyshev,gulec2013new,falcon2007fibonacci,abd2017generalization
,dilcher2000hypergeometric,Ind,dafnis2018identity,ye2017common}).
From a practical point of view, some recent articles employ
Fibonacci and Lucas polynomials and their generalized polynomials in
treating some kinds of ordinary and fractional differential
equations. For example, Abd-Elhameed and Youssri in
\cite{abd2017generalized,abd2016spectral,abd2016novel,abdspectral}
have developed some numerical algorithms for solving some kinds of
fractional differential equations based on employing these kinds of
polynomials.
\smallskip
\par
If we have two polynomial sets $\{A_{i}(x)\}_{i\ge 0}$ and $\{B_{j}(x)\}_{j\ge 0}$, then to solve the connection problem between them, we have to find the connection coefficients $C_{i,j}$ in the
equation
\[A_{i}(x)=\sum_{j=0}^{i}C_{i,j}\, B_{j}(x).\]
 The coefficients $C_{i,j}$ have prominent parts in several problems in mathematics as well as in mathematical physics. Due to this importance, the connection problems between various polynomials have been investigated by many authors. In this regard, Abd-Elhameed et al. in \cite{RamFib} solved the connection problems between Fibonacci polynomials and Chebyshev polynomials of first and second kinds. Some other studies concerning connection problems can be found in \cite{area2001solving,maroni2008connection}.
 \smallskip
\par
 The hypergeometric functions have vital parts in the area of special
 functions and their applications (see, for example \cite{andrews1999special}). In fact,  almost all of the elementary
 functions of mathematics are either hypergeometric or ratios of hypergeometric functions. In addition, connection coefficients and also linearization coefficients are often expressed in terms of hypergeometric functions of different types (see, for example \cite{Tcheutia,W-Single,abd2016linearization}).
 \smallskip
\par
 We can summarize the principal objectives of this article in the following points:
  \begin{itemize}
  \item
  Solving the connection problems between two certain classes of polynomials generalizing Fibonacci and Lucas polynomials. we will show that the obtained expressions involve hypergeometric functions of the type $_2F_{1}(z)$ for certain $z$.
  \item
  Developing some applications of the introduced connection formulae. In this respect, two applications are presented. In the first, some new formulae between some celebrated numbers are given. In the second, some definite weighted integrals are evaluated in closed forms.
   \item
  Employing the two classes of generalized Fibonacci and generalized Lucas polynomials to obtain new reduction formulae of certain kinds of even and odd radicals.
 \end{itemize}
\par
 The contents of the paper are organized as follows. The next section involves some fundamental properties and useful relations concerned with the two generalized Fibonacci and Lucas polynomials. Section 3 is concerned with developing new connection formulae between the two introduced generalized polynomials. We develop some other connection formulae between two polynomials belong to the same class of the generalized polynomials in Section 4. Section 5 is devoted to introducing two applications of the introduced connection formulae. Section 6 is interested in developing new expressions of certain even and odd radicals with the aid of the two introduced generalized classes of polynomials. New formulae of some other radicals are developed in Section 7. We end the paper by some conclusions in Section 8.

 \section{Some properties of two generalized classes of Fibonacci and Lucas polynomials}
 In this section, we give some properties concerned with two certain classes generalizing Fibonacci and Lucas polynomials.\\
  First, let $a,b,r,s$ be nonzero real numbers. Two classes generalizing Fibonacci and Lucas polynomials can be generated
  by means of the following two recurrence relations:
 \bq
  \label{recgenFib}
  \phi^{a,b}_{j}(x)=a\, x\, \phi^{a,b}_{j-1}(x)+b\, \phi^{a,b}_{j-2}(x),\quad \phi_{0}(x)=1,\  \phi_{1}(x)=a\, x,
 \eq
 and
  \bq
  \label{recgenLuc}
  \psi^{r,s}_{j}(x)=r\, x\, \psi^{r,s}_{j-1}(x)+s\, \psi^{r,s}_{j-2}(x),\quad \psi_{0}(x)=2,\  \psi_{1}(x)=r\, x.
 \eq
 Note that for each $j\ge 0$, $\phi^{a,b}_{j}(x)$ and $\psi^{r,s}_{j}(x)$ is of degree $j$.\\
 See also Philippou \cite{philippou1986distributions,philippouEncyclopedia1,philippouEncyclopedia2}, and references therein for
different generalized Fibonacci and Lucas polynomials, which, in addition to their importance
they are very useful in probability and reliability theory as well.\\
 The Binet's formulae of the two polynomials $\phi^{a,b}_{j}(x)$ and $\psi^{r,s}_{j}(x)$ are:
  \bq
  \label{BinetFib}
   \phi^{a,b}_{j}(x)=\dd\frac{\left(a\, x+\sqrt{a^2\, x^2+4\, b}\right)^{j+1}-\left(a\, x-\sqrt{a^2\, x^2+4\, b}\right)^{j+1}}{2^{j+1}\, \sqrt{a^2\, x^2+4\, b}},
  \eq
  and
    \bq
  \label{BinetLuc}
   \psi^{r,s}_{j}(x)=\dd\frac{\left(r\, x+\sqrt{r^2\, x^2+4\, s}\right)^j+\left(r\, x-\sqrt{r^2\, x^2+4\, s}\right)^j}{2^j}.
  \eq
    \smallskip
\par
Two of the most useful properties of the two polynomials $\phi^{a,b}_{j}(x)$ and $\psi^{r,s}_{j}(x)$ are their power form representations. These polynomials can be given explicitly in the following two combinatorial forms:
   \bq
   \label{powerphi}
   \phi^{a,b}_{j}(x)=\dd\sum_{m=0}^{\left\lfloor
 \frac{j}{2}\right\rfloor}\tbinom{j-m}{m}\, b^m\,  a^{j-2 m} \,  x^{j-2 m},
  \eq
   and
    \bq
   \label{powerpsi}
   \psi^{r,s}_{j}(x)=j\, \dd\sum_{m=0}^{\left\lfloor
 \frac{j}{2}\right\rfloor}\frac{s^m \, r^{j-2 m}\, \binom{j-m}{m} }{i-m}\,  x^{j-2 m},
   \eq
   The inversion formulas of \eqref{powerphi} and \eqref{powerpsi} are also of interest. They are given as follows:
      \bq
    \label{invGenFib}
   x^j =a^{-j}\, \dd\sum_{i=0}^{\left\lfloor
 \frac{j}{2}\right\rfloor} \dd\frac{(-b)^i\, \dd\tbinom{j}{i}\, (j-2i+1)}{j-i+1}\, \phi^{a,b}_{j-2i}(x),\quad j\ge 0,
   \eq
   and
   \bq
    \label{invGenLuk}
   x^j =r^{-j}\, \dd\sum_{i=0}^{\left\lfloor
 \frac{j}{2}\right\rfloor} (-s)^i\, c_{j-2i}\, \binom{j}{i}\, \psi^{r,s}_{j-2i}(x),\quad j\ge 0,
   \eq
where
\[c_{j}=\begin{cases}
 \frac{1}{2},&j=1,\\
 1,&j\ge 1.\end{cases}\]
  \smallskip
\par
The main advantage of using the two generalized polynomials $\phi^{a,b}_{j}(x)$ and $\psi^{r,s}_{j}(x)$ is that several important classes of polynomials can be obtained as special cases of them. In fact, Fibonacci, Pell, Fermat, and Chebyshev and Dickson polynomials of the second kind are special kinds of $\phi^{a,b}_{j}(x)$, while Lucas, Pell-Lucas, Fermat-Lucas, and Chebyshev and Dickson polynomials of the first kind are special polynomials of $\psi^{r,s}_{j}(x)$. Table 1 displays the different celebrated special classes of the two generalized classes of polynomials.

  \begin{table}[ht]
  \centering
  \caption{Special cases of the two generalizing classes}
\begin{tabular}{|l|l|l|l|}
 \hline
Fibonacci polynomials& $F_{j+1}(x)=\phi^{1,1}_{j}(x)$\\[0.5cm] \hline
Pell polynomials & $P_{j+1}(x)=\phi^{2,1}_{j}(x)$ \\[0.5cm]\hline
Fermat polynomials & $\mathcal{F}_{j+1}(x)=\phi^{3,-2}_{j}(x)$ \\[0.5cm]\hline
Chebyshev polynomials of second kind & $U_j(x)=\phi^{2,-1}_{j}(x)$ \\[0.5cm]\hline
Dickson polynomials of second kind & $E^{\alpha}_{j}(x)=\phi^{1,-\alpha}_{j}(x)$ \\[0.5cm]\hline
Lucas polynomials& $L_{j}(x)=\psi^{1,1}_{j}(x)$\\[0.5cm] \hline
Pell-Lucas polynomials & $Q_{j}(x)=\psi^{2,1}_{j}(x)$ \\[0.5cm]\hline
Fermat-Lucas polynomials & $f_{j}(x)=\psi^{3,-2}_{j}(x)$ \\[0.5cm]\hline
Chebyshev polynomials of first kind & $T_j(x)=\frac12\psi^{2,-1}_{j}(x)$ \\[0.5cm]\hline
Dickson polynomials of first kind & $D^{\alpha}_{j}(x)=\psi^{1,-\alpha}_{j}(x)$ \\[0.5cm]
  \hline
  \end{tabular}
\end{table}
\noindent
   \section{Connection formulae between the two generalized classes of Fibonacci and Lucas polynomials}
  This section focuses on developing new connection formulae between the two generalized classes of polynomials which were introduced in Section 2. Several connection formulas between some famous polynomials are also deduced as special cases. The connection coefficients are expressed in $_{2}F_{1}(z)$ for certain $z$.\\
  Here, we recall the definition of the
hypergeometric function $_{2}F_{1}(z)$ (see \cite{andrews1999special})
\[_{2}F_{1}\left.\left(
\begin{array}{cccc}
 a_{1},a_{2}\\
 b_{1}\\
\end{array}
\right|z\right)=\dd\sum_{r=0}^{\infty}\dd\frac{(a_{1})_{r}\,
(a_{2})_{r}}{(b_{1})_{r}}\ \dd\frac{z^r}{r!} ,\]
 where
 $a_{1},\, a_{2}$ and $b_{1}$ are complex or real parameters, with
 $b_{1}\not=0$.\\
  Now, the following lemma is needed.
 \begin{lem}
 \label{hyperdiff}
 \label{lem1}
 Let $a,b,r,s$ be any nonzero real numbers and let
 \[A_{m,i}=\dd\frac{i\, s^m\, \left(\dd\frac{r}{a}\right)^{i-2 m}\, \binom{i-m}{m}}{i-m}\ _{2} F_{1}\left.\left(
\begin{array}{cc}
 -m,i-m\\
  i-2m+2
\end{array}
\right|\dd\frac{b\, r^2}{a^2\, s}\right).\]
 The following recurrence relation is satisfied by $A_{m,i}:$
  \bq
   \label{rec1}
  r\, A_{m,i-1}-r\, b\, A_{m-1,i-1}+a\, s\,
 A_{m-1,i-2}=a\, A_{m,i}.
  \eq
 \end{lem}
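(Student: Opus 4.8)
The plan is to prove \eqref{rec1} by a direct computation that reduces the whole statement to an elementary polynomial identity. Since the top parameter $-m$ is a nonpositive integer, the Gauss function terminates ($(-m)_k=0$ for $k>m$), so each $A_{m,i}$ is a finite sum. First I would write
\[A_{m,i}=\frac{i\,s^m\,(r/a)^{i-2m}\,\binom{i-m}{m}}{i-m}\sum_{k=0}^{m}\frac{(-m)_k\,(i-m)_k}{(i-2m+2)_k\,k!}\left(\frac{b\,r^2}{a^2\,s}\right)^{k},\]
and absorb the powers of $z=\dfrac{b\,r^2}{a^2 s}$ into the prefactor. Setting $k=m-\ell$ and turning every Pochhammer symbol into factorials collapses this to the explicit monomial expansion
\[A_{m,i}=\sum_{\ell=0}^{m}D(m,i,\ell)\,s^{\ell}\,b^{m-\ell}\,(r/a)^{i-2\ell},\]
where
\[D(m,i,\ell)=(-1)^{m-\ell}\,\frac{i\,(i-2m+1)\,(i-\ell-1)!}{\ell!\,(m-\ell)!\,(i-m-\ell+1)!}.\]

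Next I would substitute this expansion into the four terms of \eqref{rec1}. Treating $a,b,s$ and $u=r/a$ as independent parameters and using $r=a\,u$, each of $r\,A_{m,i-1}$, $r\,b\,A_{m-1,i-1}$, $a\,s\,A_{m-1,i-2}$ and $a\,A_{m,i}$ becomes $a$ times a sum over the \emph{same} monomials $s^{\ell}\,b^{m-\ell}\,u^{\,i-2\ell}$; the term $a\,s\,A_{m-1,i-2}$ additionally requires the shift $\ell\mapsto\ell+1$, which also accounts for the boundary contributions. Because these monomials are linearly independent, matching their coefficients reduces \eqref{rec1} to the single scalar identity
\[D(m,i,\ell)=D(m,i-1,\ell)-D(m-1,i-1,\ell)+D(m-1,i-2,\ell-1),\]
valid for every admissible $\ell$, under the convention that $D$ vanishes outside its range.

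To finish I would cancel the common sign $(-1)^{m-\ell}$ and the common factorial $(i-\ell-2)!$ from all terms, and clear the remaining denominators by multiplying through by $\ell!\,(m-\ell)!\,(i-m-\ell+1)!$. This turns the identity into the purely polynomial statement
\[i\,(i-2m+1)\,(i-\ell-1)=(i-1)(i-2m)(i-m-\ell+1)+(i-1)(i-2m+2)(m-\ell)+(i-2)(i-2m+1)\,\ell,\]
in which both sides are cubic in $i$. Expanding and comparing the coefficients of $i^{3},i^{2},i^{1},i^{0}$ confirms the equality termwise, which completes the proof.

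I expect the main obstacle to be the factorial bookkeeping in the first two steps: correctly performing the shifts $i\mapsto i-1$, $m\mapsto m-1$, $\ell\mapsto\ell-1$ inside the ratios of factorials, and carefully tracking the sign $(-1)^{m-\ell}$ together with the boundary terms produced by the reindexing $\ell\mapsto\ell+1$. Once the monomial coefficients are isolated, the surviving identity is elementary. It is worth recording the conceptual reason behind \eqref{rec1}: combining the power form \eqref{powerpsi} with the inversion formula \eqref{invGenFib} shows that $A_{m,i}$ is exactly the coefficient of $\phi^{a,b}_{i-2m}(x)$ in the expansion of $\psi^{r,s}_i(x)$, and \eqref{rec1} is precisely the compatibility condition forced on these coefficients by the three-term recurrences \eqref{recgenLuc} and \eqref{recgenFib}. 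The direct computation above establishes \eqref{rec1} without invoking the connection formula itself, so that the lemma may then be used to prove that formula by induction on $i$.
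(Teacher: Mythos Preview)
Your proposal is correct and is precisely the ``straightforward lengthy computation in view of the definition of the hypergeometric function'' that the paper invokes as its proof; you have simply written out in full the expansion of the terminating ${}_2F_1$, the reindexing, and the resulting polynomial identity that the authors leave to the reader. The closing remark about $A_{m,i}$ being the connection coefficient and \eqref{rec1} being forced by the recurrences \eqref{recgenLuc}--\eqref{recgenFib} is exactly how the lemma is used in the paper's proof of Theorem~\ref{connecthm2}.
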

 \begin{proof}
 Lemma \ref{hyperdiff} can be proved via straightforward lengthy computations in view of the definition of the hypergeometric function $_2F_{1}(z)$.
 \end{proof}
    \begin{thm}
 \label{connecthm1}
  For every non-negative integer $i$, the following connection formula holds
 \bq
 \label{connect1}
\phi^{a,b}_{i}(x)=\left(\frac{a}{r}\right)^i\, \dd\sum_{m=0}^{\left\lfloor
 \frac{i}{2}\right\rfloor}c_{i-2 m}\, (-s)^m\, \binom{i}{m} \ _{2} F_{1}\left.\left(
\begin{array}{cc}
 -m,m-i\\
  -i
\end{array}
\right|\frac{b\, r^2}{a^2\, s}\right)\ \psi^{r,s}_{i-2m}(x).
 \eq
 \end{thm}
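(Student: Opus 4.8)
The plan is to prove the formula by a direct expansion: first write $\phi^{a,b}_i(x)$ in its power form \eqref{powerphi}, then replace each monomial $x^{i-2n}$ by its expansion in the generalized Lucas polynomials via the inversion formula \eqref{invGenLuk}, and finally collect the coefficient of each $\psi^{r,s}_{i-2m}(x)$ and recognize it as the claimed hypergeometric expression. First I would start from \eqref{powerphi},
\[
\phi^{a,b}_{i}(x)=\sum_{n=0}^{\lfloor i/2\rfloor}\binom{i-n}{n}\,b^{n}\,a^{i-2n}\,x^{i-2n}.
\]
Substituting \eqref{invGenLuk} for $x^{i-2n}$ (with $j=i-2n$ there) turns this into a double sum over $n$ and the inner index $k$, with $\psi^{r,s}$ appearing at degree $i-2n-2k$. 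Setting $m=n+k$ and interchanging the order of summation, the coefficient of the fixed polynomial $\psi^{r,s}_{i-2m}(x)$ becomes a single finite sum over $n$ with $k=m-n$ ranging over $0\le n\le m$ (the constraints from the two sums collapse to $m\le\lfloor i/2\rfloor$, which is automatic). After factoring out the common quantities $(a/r)^{i}$, $(-s)^{m}$ and $c_{i-2m}$, this coefficient reduces to
\[
c_{i-2m}\,(-s)^{m}\left(\frac{a}{r}\right)^{i}\sum_{n=0}^{m}\binom{i-n}{n}\binom{i-2n}{m-n}\left(-\frac{b\,r^{2}}{a^{2}\,s}\right)^{n}.
\]

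The main step is then to identify this finite sum with $\binom{i}{m}\,{}_{2}F_{1}\!\left(-m,m-i;-i;z\right)$, where $z=\frac{b\,r^{2}}{a^{2}\,s}$. I would do this termwise: writing $\binom{i-n}{n}\binom{i-2n}{m-n}=\frac{(i-n)!}{n!\,(m-n)!\,(i-n-m)!}$ and expanding the Pochhammer symbols $(-m)_{n}$, $(m-i)_{n}$, $(-i)_{n}$ as ratios of factorials, one checks that the $n$-th term of each side agrees, with the signs combining so that $(-z)^{n}$ on the left matches $z^{n}$ together with the sign produced by $(-m)_{n}(m-i)_{n}/(-i)_{n}$ on the right. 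Since $(-m)_{n}$ vanishes for $n>m$, the hypergeometric series terminates exactly at $n=m$, matching the range of the finite sum, and the negative lower parameter $-i$ causes no division by zero because $n\le m\le\lfloor i/2\rfloor<i$. This termwise identity is the heart of the argument and the place where care with factorials and signs is required.

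An alternative route, which is presumably what motivates Lemma \ref{lem1}, is induction on $i$ using the defining recurrences \eqref{recgenFib} and \eqref{recgenLuc}. Writing $\phi^{a,b}_{i}=a\,x\,\phi^{a,b}_{i-1}+b\,\phi^{a,b}_{i-2}$ and eliminating $x\,\psi^{r,s}_{j}$ through $r\,x\,\psi^{r,s}_{j}=\psi^{r,s}_{j+1}-s\,\psi^{r,s}_{j-1}$ produces a three-term recurrence for the connection coefficients; one then verifies that the proposed closed form satisfies it, reducing the check to a contiguous-relation identity for the $_{2}F_{1}$ of the type recorded in Lemma \ref{lem1}. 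I expect the direct expansion above to be shorter and less error-prone, the only genuine obstacle being the bookkeeping of the double-sum reindexing and the clean extraction of the terminating $_{2}F_{1}$.
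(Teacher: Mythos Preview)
Your direct-expansion argument is correct: the reindexing $m=n+k$ is clean (since $\lfloor(i-2n)/2\rfloor=\lfloor i/2\rfloor-n$, the constraint collapses exactly to $0\le n\le m\le\lfloor i/2\rfloor$), and your termwise identification
\[
\binom{i-n}{n}\binom{i-2n}{m-n}=\frac{(i-n)!}{n!\,(m-n)!\,(i-m-n)!}
=\binom{i}{m}\,\frac{(-m)_n(m-i)_n}{(-i)_n\,n!}\,(-1)^n
\]
is precisely what is needed to recognise $\binom{i}{m}\,{}_2F_1(-m,m-i;-i;z)$, with the $(-1)^n$ absorbing the sign in $(-z)^n$. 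Your remark on the nonpositive lower parameter is also accurate.

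This is, however, not the route the paper takes for Theorem~\ref{connecthm1}. The paper proves Theorems~\ref{connecthm1} and~\ref{connecthm2} together by induction on $i$: one applies the recurrence \eqref{recgenLuc} (respectively \eqref{recgenFib}), eliminates the factor $x$ using the other recurrence, and then invokes Lemma~\ref{lem1} to verify that the resulting three-term combination of coefficients collapses to the predicted closed form. Your ``alternative route'' paragraph describes this exactly. Interestingly, your primary method---power form plus inversion formula followed by hypergeometric identification---is the very strategy the paper adopts for Theorems~\ref{connecthm3} and~\ref{connecthm4}, where it is presented as a second proof technique. So you have in effect swapped the paper's two methods. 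The direct expansion has the advantage of bypassing Lemma~\ref{lem1} entirely (whose proof the paper defers to ``straightforward lengthy computations''), at the cost of the double-sum bookkeeping you flag; the inductive argument is more mechanical once the lemma is in hand but hides the combinatorial content in that lemma.
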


 \begin{thm}
 \label{connecthm2}
  For every non-negative integer $i$, the following connection formula holds
 \bq
 \label{connect2}
\psi^{r,s}_{i}(x)=i\, \dd\sum_{m=0}^{\left\lfloor
 \frac{i}{2}\right\rfloor}\frac{s^m\, \left(\dd\frac{r}{a}\right)^{i-2 m}\, \binom{i-m}{m}}{(i-m)(i-2m+1)}\ _{2} F_{1}\left.\left(
\begin{array}{cc}
 -m,i-m\\
  i-2m+2
\end{array}
\right|\dd\frac{b\, r^2}{a^2\, s}\right)\ \phi^{a,b}_{i-2m}(x).
 \eq
 \end{thm}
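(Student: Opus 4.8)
\emph{Proof strategy.} The plan is to compute the connection coefficients explicitly by composing the two combinatorial representations recorded in Section 2, and then to recognize the resulting finite sums as terminating Gauss hypergeometric functions. First I would start from the power form \eqref{powerpsi} of the generalized Lucas polynomials, written with summation index $k$,
\[\psi^{r,s}_{i}(x)=i\sum_{k=0}^{\left\lfloor i/2\right\rfloor}\frac{s^{k}\,r^{i-2k}\,\binom{i-k}{k}}{i-k}\,x^{i-2k},\]
and replace each monomial $x^{i-2k}$ by its expansion in the generalized Fibonacci basis via the inversion formula \eqref{invGenFib} with $j=i-2k$. This yields a double sum whose inner index $l$ controls the degree reduction and produces $\phi^{a,b}_{i-2k-2l}(x)$.

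Next I would collect, for each fixed $m$, the contributions to $\phi^{a,b}_{i-2m}(x)$. The constraint $i-2k-2l=i-2m$ forces $l=m-k$, so the coefficient of $\phi^{a,b}_{i-2m}(x)$ reduces to a single finite sum over $k=0,\dots,m$. Substituting the reindexing $n=m-k$ and factoring out $s^{m}\,(r/a)^{i-2m}$, the leftover factor $(-b)^{n}r^{2n}/(s^{n}a^{2n})=(-1)^{n}z^{n}$ with $z=b\,r^{2}/(a^{2}s)$ emerges, and the remaining sum is a terminating ${}_2F_1\!\left(-m,\,i-m;\,i-2m+2;\,z\right)$. Hence the coefficient of $\phi^{a,b}_{i-2m}(x)$ appears as an explicit rational prefactor times this $_2F_1$, which is exactly the hypergeometric building block $A_{m,i}$ of Lemma \ref{lem1} and the form claimed in \eqref{connect2}.

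The technical heart is the term-by-term identity behind this recognition: writing the product of binomials $\binom{i-m+n}{m-n}\binom{i-2m+2n}{n}$ divided by $(i-m+n)(i-2m+n+1)$ on one side, and the Pochhammer ratio $(-m)_n(i-m)_n/\!\left[(i-2m+2)_n\,n!\right]$ times the $n=0$ term on the other, both collapse after cancellation to $\dfrac{(i-m+n-1)!}{(m-n)!\,n!\,(i-2m+n+1)!}$, the sign $(-1)^n$ being supplied on the left by $(-b)^{m-k}$ and on the right by $(-m)_n$. I expect the main obstacle to be purely the bookkeeping here: keeping the floor bounds, the shift $l=m-k$, and the two sources of the sign aligned, so that the Pochhammer symbols assemble into the stated $_2F_1$ rather than a neighboring one.

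A cleaner, and evidently intended, alternative is induction on $i$ using the defining recurrence \eqref{recgenLuc}. Writing $\psi^{r,s}_i=r\,x\,\psi^{r,s}_{i-1}+s\,\psi^{r,s}_{i-2}$, substituting the inductively assumed expansions, and using $r\,x\,\phi^{a,b}_{j}=\tfrac{r}{a}\bigl(\phi^{a,b}_{j+1}-b\,\phi^{a,b}_{j-1}\bigr)$ from \eqref{recgenFib}, one collects the coefficient of $\phi^{a,b}_{i-2m}(x)$ and finds that these coefficients obey precisely the three-term recurrence \eqref{rec1}. Since Lemma \ref{lem1} establishes that the same recurrence is satisfied by $A_{m,i}$, matching the base cases $i=1,2$ (with $\psi_0=2$ supplied as data) pins the coefficients down as the terminating $_2F_1$ of \eqref{connect2} and closes the induction. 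On this route the only delicate point is the index shift in the two summands of $r\,x\,\phi^{a,b}_{j}$, which is exactly what makes \eqref{rec1} appear.
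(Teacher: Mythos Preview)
Your proposal is correct and in fact contains both arguments the paper deploys in this section. The inductive route you describe as the ``cleaner, evidently intended'' alternative---apply \eqref{recgenLuc}, rewrite $x\,\phi^{a,b}_{j}$ via \eqref{recgenFib}, collect the coefficient of $\phi^{a,b}_{i-2m}$, and close with the recurrence \eqref{rec1} of Lemma~\ref{lem1}---is precisely the proof the paper gives for Theorem~\ref{connecthm2}. Your primary direct approach (insert the inversion formula \eqref{invGenFib} into the power form \eqref{powerpsi}, reindex with $l=m-k$, and recognize the inner sum as a terminating ${}_2F_1$) is not used for this theorem, but it is exactly the method the paper adopts for the companion Theorems~\ref{connecthm3} and~\ref{connecthm4}; the authors remark explicitly that the two strategies are interchangeable. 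The direct route delivers the closed form in one pass without an auxiliary lemma, while the induction trades the binomial bookkeeping for the separate verification of \eqref{rec1}.

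One small caution on your last identification. Carrying your direct computation through yields, as coefficient of $\phi^{a,b}_{i-2m}$, exactly $A_{m,i}=\dfrac{i\,s^{m}(r/a)^{i-2m}\binom{i-m}{m}}{i-m}\,{}_2F_1(\cdots)$, which matches the final display in the paper's proof. The printed statement \eqref{connect2} carries an extra $(i-2m+1)$ in the denominator; the special case \eqref{sp2} (where the $m=0$ coefficient must be $1$) confirms that this is a misprint and that $A_{m,i}$ is the intended coefficient. So your sentence equating ``$A_{m,i}$'' with ``the form claimed in \eqref{connect2}'' is right in spirit but not literally---trust $A_{m,i}$.
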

 \begin{proof}
  The proofs of Theorems \ref{connecthm1}, \ref{connecthm2} can be proved by similar procedures. So, it is sufficient to prove Theorem \ref{connecthm2}. We will proceed by induction. We assume that \eqref{connect2} holds for all $j<i$ and we will show that \eqref{connect2} is itself valid. If we start with the recurrence relation \eqref{recgenLuc} and apply the induction hypothesis twice, then we get
 \bq
 \label{ind21}
  \psi^{a,b}_{i}(x)=r\, x\, \dd\sum_{m=0}^{\left\lfloor
 \frac{i-1}{2}\right\rfloor} A_{m,i-1}\, \phi^{a,b}_{i-2m-1}(x)+s\, \dd\sum_{m=0}^{\left\lfloor
 \frac{i}{2}\right\rfloor-1} A_{m,i-2}\,\phi^{a,b}_{i-2m-2}(x),
 \eq
 where
 \[A_{m,i}=\frac{i\, s^m\, \left(\dd\frac{r}{a}\right)^{i-2 m}\, \binom{i-m}{m}}{i-m}\ _{2} F_{1}\left.\left(
\begin{array}{cc}
 -m,i-m\\
  i-2m+2
\end{array}
\right|\dd\frac{b\, r^2}{a^2\, s}\right).\]
 The recurrence relation \eqref{recgenFib} implies that
 \bq
 \label{recused}
 x\, \phi^{a,b}_{i-2m-1}(x)=\dd\frac{1}{a}\, \phi^{a,b}_{i-2m}(x)-\dd\frac{b}{a}\, \phi^{a,b}_{i-2m-2}(x).
 \eq
  If we insert \eqref{recused} into \eqref{ind21}, then the following relation is obtained
\[\begin{split}
  \psi^{a,b}_{i}(x)=&\dd\frac{r}{a}\dd\sum_{m=0}^{\left\lfloor
 \frac{i-1}{2}\right\rfloor} A_{m,i-1}\,  \phi^{a,b}_{i-2m}(x)-\dd\frac{r\, b}{a}\dd\sum_{m=0}^{\left\lfloor
 \frac{i-1}{2}\right\rfloor} A_{m,i-1}\, \phi^{a,b}_{i-2m-2}(x)\\
  &+s\, \dd\sum_{m=0}^{\left\lfloor
 \frac{i}{2}\right\rfloor-1} A_{m,i-2}\, \phi^{a,b}_{i-2m-2}(x).
 \end{split}
\]
  The last relation-after performing some algebraic computations- can be turned into
  the following equivalent one:
    \bq
    \begin{split}
 \label{ind23}
 \psi^{a,b}_{i}(x)=&\dd\sum_{m=1}^{\left\lfloor
 \frac{i-1}{2}\right\rfloor} \left(\frac{r}{a}A_{m,i-1}-\frac{r\, b}{a}A_{m-1,i-1}+s\,
 A_{m-1,i-2}\right)\, \phi^{a,b}_{i-2m}(x)+\frac{r}{a}\, A_{0,i-1}\, \phi^{a,b}_{i}(x)\\
  &-\frac{r\, b}{a}\, A_{\left\lfloor
 \frac{i-1}{2}\right\rfloor,i-1}\, \phi^{a,b}_{i-2\, \left\lfloor
 \frac{i-1}{2}\right\rfloor-2}(x)+s\, A_{\frac {i}{2}-1,i-2}\, \xi_{i},
 \end{split}
 \eq
where
 \[\xi_{i}=
  \begin{cases}
  1, &{i\ \text{even}},\\
  0,  &{i\ \text{odd}}.
  \end{cases}\]
 In virtue of Lemma \ref{lem1}, one can rewrite \eqref{ind23} in the form
     \bq
    \begin{split}
 \label{ind24}
 \psi^{a,b}_{i}(x)=&\dd\sum_{m=1}^{\left\lfloor
 \frac{i-1}{2}\right\rfloor} A_{m,i}\, \phi^{a,b}_{i-2m}(x)+\frac{r}{a}\, A_{0,i-1}\, \phi^{a,b}_{i}(x)\\
  &-\frac{r\, b}{a}\, A_{\left\lfloor
 \frac{i-1}{2}\right\rfloor,i-1}\, \phi^{a,b}_{i-2\, \left\lfloor
 \frac{i-1}{2}\right\rfloor-2}(x)+s\, A_{\frac {i}{2}-1,i-2}\, \xi_{i}.
 \end{split}
 \eq
It is not difficult to see that \eqref{ind24} can be written alternatively as
\[
\psi^{r,s}_{i}(x)=i\, \dd\sum_{m=0}^{\left\lfloor
 \frac{i}{2}\right\rfloor}\frac{s^m\, \left(\dd\frac{r}{a}\right)^{i-2 m}\, \binom{i-m}{m}}{i-m}\ _{2} F_{1}\left.\left(
\begin{array}{cc}
 -m,i-m\\
  i-2m+2
\end{array}
\right|\dd\frac{b\, r^2}{a^2\, s}\right)\ \phi^{a,b}_{i-2m}(x).
\]
 The proof of Theorem \ref{connecthm2} is now complete.\\
 \end{proof}
  \noindent
  Next, we give a special case of formula \eqref{connect1}.
  \begin{cor}
  \label{Cor1}
   For $r=a,s=b$, the connection formula \eqref{connect1} reduces to the following one
 \bq
 \label{sp1}
\phi^{a,b}_{i}(x)=\dd\sum_{m=0}^{\left\lfloor
 \frac{i}{2}\right\rfloor}c_{i-2 m}\, (-b)^m\, \psi^{a,b}_{i-2m}(x).
 \eq
  \end{cor}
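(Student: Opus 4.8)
The plan is to obtain \eqref{sp1} as a pure specialization of the general formula \eqref{connect1}, the only genuine content being the evaluation of a terminating ${}_2F_1$ at unit argument. First I would substitute $r=a$ and $s=b$ directly into \eqref{connect1}. Two simplifications happen simultaneously: the overall prefactor collapses, $\left(\frac{a}{r}\right)^i=1$, and the argument of the hypergeometric function becomes $\frac{b\,r^2}{a^2\,s}=1$. Consequently \eqref{connect1} takes the form
\[
\phi^{a,b}_{i}(x)=\dd\sum_{m=0}^{\left\lfloor \frac{i}{2}\right\rfloor} c_{i-2m}\,(-b)^m\,\binom{i}{m}\ _{2}F_{1}\left.\left(
\begin{array}{c}
 -m,\,m-i\\
  -i
\end{array}
\right|1\right)\,\psi^{a,b}_{i-2m}(x),
\]
so the whole corollary reduces to proving the single identity $\binom{i}{m}\,{}_2F_1(-m,m-i;-i;1)=1$ for each index $m$ appearing in the sum.

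The core step is to evaluate the terminating series in closed form. Because the top parameter $-m$ is a non-positive integer, the series stops after $m+1$ terms, and I would apply the Chu--Vandermonde identity ${}_2F_1(-n,\beta;\gamma;1)=\frac{(\gamma-\beta)_n}{(\gamma)_n}$ with $n=m$, $\beta=m-i$, and $\gamma=-i$. This gives
\[
{}_{2}F_{1}\left.\left(
\begin{array}{c}
 -m,\,m-i\\
  -i
\end{array}
\right|1\right)=\frac{\bigl(-i-(m-i)\bigr)_m}{(-i)_m}=\frac{(-m)_m}{(-i)_m}.
\]
Evaluating the Pochhammer symbols yields $(-m)_m=(-1)^m\,m!$ and $(-i)_m=(-1)^m\,\frac{i!}{(i-m)!}$, so their quotient equals $\frac{m!\,(i-m)!}{i!}=\binom{i}{m}^{-1}$. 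Multiplying by $\binom{i}{m}$ therefore produces exactly $1$, and inserting this back into the displayed expression recovers \eqref{sp1} term by term, since the factors $c_{i-2m}$ and $(-b)^m$ pass through the specialization unchanged.

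The one point that requires care--and the only place the argument could break down--is the legitimacy of invoking Chu--Vandermonde when the lower parameter $\gamma=-i$ is itself a negative integer, since the generic statement presumes $(\gamma)_n\neq0$. I would dispose of this by noting that every admissible index obeys $m\le\left\lfloor \frac{i}{2}\right\rfloor<i$, whence the factors $(-i)(-i+1)\cdots(-i+m-1)$ constituting $(-i)_m$ are all strictly negative and in particular nonzero; thus the series is well defined and terminates before reaching the pole, so the closed-form evaluation is valid. As a safeguard one may bypass the named identity entirely and verify the sum by direct term-by-term summation, which reduces to the identical Pochhammer computation. With this subtlety settled, the reduction is immediate and the proof of Corollary \ref{Cor1} is complete.
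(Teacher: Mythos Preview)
Your proof is correct and follows essentially the same route as the paper: substitute $r=a$, $s=b$ into \eqref{connect1} and evaluate the resulting terminating ${}_2F_1$ at unit argument via Chu--Vandermonde to obtain $\binom{i}{m}^{-1}$. You add the useful explicit Pochhammer computation and the verification that $(-i)_m\neq 0$ for $m\le\lfloor i/2\rfloor$, a point the paper passes over silently.
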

  \begin{proof}
   The substitution by $r=a$ and $s=b$ into relation \eqref{connect1} yields the following relation:
   \[
 \label{connect11}
\phi^{a,b}_{i}(x)=\dd\sum_{m=0}^{\left\lfloor
 \frac{i}{2}\right\rfloor}c_{i-2 m}\, (-b)^m\, \binom{i}{m} \ _{2} F_{1}\left.\left(
\begin{array}{cc}
 -m,m-i\\
  -i
\end{array}
\right|1\right)\ \psi^{r,s}_{i-2m}(x).
\]
 With the aid of Chu-Vandermond identity, it is easy to see that
 \[\ _{2} F_{1}\left.\left(
\begin{array}{cc}
 -m,m-i\\
  -i
\end{array}
\right|1\right)=\dd\frac{1}{\binom{i}{m}},\]
and consequently, formula \eqref{sp1} can be easily obtained.
  \end{proof}
   Taking into consideration the special polynomials of the two classes $\phi^{a,b}_{j}(x), \psi^{r,s}_{j}(x)$ mentioned in Table 1, several connection formulae can be deduced as special cases of Corollary \ref{Cor1}.
  \begin{cor}
  \label{CorSp1} For a non-negative integer $i$, the following
   connection formulae hold
   \begingroup
\allowdisplaybreaks
 \begin{align}
F_{i+1}(x)=&\dd\sum_{m=0}^{\left\lfloor
 \frac{i}{2}\right\rfloor}c_{i-2 m}\, (-1)^m\, L_{i-2m}(x),
\\
P_{i+1}(x)=&\dd\sum_{m=0}^{\left\lfloor
 \frac{i}{2}\right\rfloor}c_{i-2 m}\, (-1)^m\, Q_{i-2m}(x),
 \\
\mathcal{F}_{i+1}(x)=&\dd\sum_{m=0}^{\left\lfloor
 \frac{i}{2}\right\rfloor}c_{i-2 m}\, 2^m\, f_{i-2m}(x),
\\
\label{UT}
  U_{i}(x)=&2 \sum _{m=0}^{\left\lfloor \frac{i}{2}\right\rfloor } c_{i-2 m}\ T_{i-2 m}(x),\\
E^{\alpha}_{i+1}(x)=&\dd\sum_{m=0}^{\left\lfloor
 \frac{i}{2}\right\rfloor}c_{i-2 m}\, \alpha^m\, D^{\alpha}_{i-2m}(x).
  \end{align}
  \endgroup
  \end{cor}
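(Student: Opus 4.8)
The plan is to obtain all five identities as direct specializations of the master formula \eqref{sp1} in Corollary \ref{Cor1}, namely
\[\phi^{a,b}_{i}(x)=\sum_{m=0}^{\left\lfloor \frac{i}{2}\right\rfloor}c_{i-2m}\,(-b)^m\,\psi^{a,b}_{i-2m}(x),\]
combined with the dictionary supplied by Table 1. First I would fix a polynomial pair and read off the corresponding parameters $(a,b)$: for the Fibonacci/Lucas pair $(a,b)=(1,1)$, for the Pell/Pell-Lucas pair $(a,b)=(2,1)$, for the Fermat/Fermat-Lucas pair $(a,b)=(3,-2)$, for the Chebyshev pair $(a,b)=(2,-1)$, and for the Dickson pair $(a,b)=(1,-\alpha)$. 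No induction or new identity is required; the whole content is a substitution into \eqref{sp1}.

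For each choice I would substitute these values into the master formula and simplify the coefficient $(-b)^m$. This immediately produces $(-1)^m$ in the Fibonacci and Pell cases, the factor $2^m$ in the Fermat case (since here $-b=2$), the factor $1$ in the Chebyshev case (since $-b=1$), and $\alpha^m$ in the Dickson case (since $-b=\alpha$). Using the identifications $F_{i+1}(x)=\phi^{1,1}_{i}(x)$, $L_{j}(x)=\psi^{1,1}_{j}(x)$, together with the analogous relations $P_{i+1}=\phi^{2,1}_i$, $Q_j=\psi^{2,1}_j$, $\mathcal{F}_{i+1}=\phi^{3,-2}_i$, $f_j=\psi^{3,-2}_j$, and $E^{\alpha}_{i}=\phi^{1,-\alpha}_i$, $D^{\alpha}_j=\psi^{1,-\alpha}_j$, the first, second, third, and fifth formulae follow verbatim once the powers of $-b$ are evaluated.

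The only place requiring a little extra care is the Chebyshev identity \eqref{UT}. Here Table 1 records $U_{i}(x)=\phi^{2,-1}_{i}(x)$ but $T_{j}(x)=\frac{1}{2}\psi^{2,-1}_{j}(x)$, so the first-kind Chebyshev polynomial is only half of the corresponding $\psi$. Substituting $(a,b)=(2,-1)$ into the master formula gives
\[\phi^{2,-1}_{i}(x)=\sum_{m=0}^{\left\lfloor \frac{i}{2}\right\rfloor}c_{i-2m}\,\psi^{2,-1}_{i-2m}(x),\]
and then replacing $\psi^{2,-1}_{i-2m}(x)=2\,T_{i-2m}(x)$ pulls out the global factor of $2$ that appears in \eqref{UT}. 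I expect this normalization bookkeeping to be the only genuine subtlety; everything else is a transcription of \eqref{sp1} under the substitutions above, so the proof reduces to assembling these five specializations.
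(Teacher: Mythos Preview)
Your proposal is correct and follows exactly the paper's approach: the paper states only that these five formulae are deduced as special cases of Corollary~\ref{Cor1} by inserting the parameter choices from Table~1, which is precisely the substitution-and-bookkeeping argument you outline (including the factor $2$ from $T_j=\tfrac12\psi^{2,-1}_j$ in the Chebyshev case).
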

  \begin{rem}
   The connection formula in \eqref{UT} can be translated to the following trigonometric identity
   \bq
   2\,  \sin\theta\, \sum _{m=0}^{\left\lfloor \frac{i}{2}\right\rfloor } c_{i-2 m}\, \cos(i-2m)\, \theta=\sin(i+1)\, \theta.
   \eq
  \end{rem}
   \begin{cor}
   For $r=a,s=b$, the connection formula \eqref{connect2} reduces to the following connection formula
 \bq
 \label{sp2}
\psi^{a,b}_{i}(x)=\phi^{a,b}_{i}(x)+b\, \phi^{a,b}_{i-2}(x).
 \eq
  \end{cor}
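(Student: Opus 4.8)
The plan is to follow exactly the template of Corollary~\ref{Cor1}: insert $r=a$ and $s=b$ into the connection formula~\eqref{connect2} and then evaluate the resulting terminating ${}_2F_1$ at unit argument through the Chu--Vandermonde identity. The substitution itself is immediate: putting $r=a$ kills the factor $\left(\tfrac{r}{a}\right)^{i-2m}$ (it becomes $1$ for every $m$), and putting $s=b$ turns the hypergeometric argument $\tfrac{b\,r^{2}}{a^{2}\,s}$ into $1$. Thus each connection coefficient becomes a rational multiple of the Gauss series ${}_2F_1(-m,\,i-m;\,i-2m+2;\,1)$ evaluated at unit argument. I would carry out the computation on the connection coefficient in the form established at the close of the proof of Theorem~\ref{connecthm2}, that is, with the single factor $i-m$ in the denominator.

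First I would record the Chu--Vandermonde evaluation
\[
{}_2F_1\left.\left(\begin{array}{cc}-m,\,i-m\\ i-2m+2\end{array}\right|1\right)=\frac{(2-m)_m}{(i-2m+2)_m}.
\]
The decisive step --- which I expect to be the genuine crux --- is the observation that the numerator Pochhammer symbol $(2-m)_m=(2-m)(3-m)\cdots 1$ acquires the factor $0$ the moment $m\ge 2$ (precisely at the term $2-m+(m-2)=0$). Hence the hypergeometric factor vanishes identically for every $m\ge 2$, and the otherwise long sum collapses to just the terms $m=0$ and $m=1$. After this, only routine bookkeeping remains.

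It then suffices to evaluate the two surviving contributions. For $m=0$ the hypergeometric factor equals $1$ and the prefactor $i$ cancels against the denominator $i-m=i$, leaving the coefficient $1$ and hence the term $\phi^{a,b}_{i}(x)$. For $m=1$ one has $(2-m)_m=(1)_1=1$ and $(i-2m+2)_m=(i)_1=i$, so the hypergeometric factor is $\tfrac1i$; together with $\binom{i-1}{1}=i-1$ and the denominator $i-m=i-1$, the coefficient simplifies to $b$, giving the term $b\,\phi^{a,b}_{i-2}(x)$. Summing the two produces the claimed identity $\psi^{a,b}_{i}(x)=\phi^{a,b}_{i}(x)+b\,\phi^{a,b}_{i-2}(x)$.

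Finally I would note the expected caveats. Because of the prefactor $i$ in~\eqref{connect2}, the formula is meant for $i\ge1$ (the value $\psi^{a,b}_{0}=2$ being exceptional), and the $m=1$ term is present only once $\lfloor i/2\rfloor\ge1$, so the clean two-term statement should be read for $i\ge2$, the $\phi^{a,b}_{i-2}$ contribution being simply absent in the degenerate low-index cases. As a consistency check, the further specialisation $a=b=r=s=1$ recovers the classical relation $L_i=F_{i+1}+F_{i-1}$ via the dictionary of Table~1.
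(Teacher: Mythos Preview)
Your proposal is correct and follows essentially the same approach as the paper: substitute $r=a$, $s=b$ into \eqref{connect2} (using, as you do, the version with the single factor $i-m$ in the denominator from the close of the proof of Theorem~\ref{connecthm2}) and then evaluate ${}_2F_1(-m,\,i-m;\,i-2m+2;\,1)$, which the paper simply records as $1$, $\tfrac{1}{i}$, $0$ for $m=0,1,\ge2$ respectively without writing out the Chu--Vandermonde step that you supply explicitly.
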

  \begin{proof}
   The substitution by $r=1,b=s$ into relation \eqref{connect2} yields the following relation:
   \bq
 \label{connect55}
\psi^{a,b}_{i}(x)=i\, \dd\sum_{m=0}^{\left\lfloor
 \frac{i}{2}\right\rfloor}\frac{b^m\, \binom{i-m}{m}}{i-m}\ _{2} F_{1}\left.\left(
\begin{array}{cc}
 -m,i-m\\
  i-2m+2
\end{array}
\right|1\right)\ \phi^{a,b}_{i-2m}(x).
 \eq
Noting that
  \[\ _{2} F_{1}\left.\left(
\begin{array}{cc}
 -m,i-m\\
  i-2m+2
\end{array}
\right|1\right)=\begin{cases}
 1,&m=0,\\
 \frac{1}{i},&m=1,\\
 0,&\text{otherwise},
\end{cases}\]
  it is easy to see that formula \eqref{connect55} reduces to
   \bq
\psi^{a,b}_{i}(x)=\phi^{a,b}_{i}(x)+b\, \phi^{a,b}_{i-2}(x).
 \eq
  \end{proof}
 Taking into consideration the special polynomials of the two classes $\phi^{a,b}_{j}(x), \psi^{r,s}_{j}(x)$, several simple connection formulae can be deduced as special cases of relation \eqref{sp2}.
  \begin{cor}
  \label{CorSp2}
   For every non-negative integer $i$, the following connection formulae hold
 \begin{align*}
L_{i}(x)=&F_{i+1}(x)+F_{i-1}(x),&&
Q_{i}(x)=P_{i+1}(x)+P_{i-1}(x),\\
f_{i}(x)=&\mathcal{F}_{i+1}-2\, \mathcal{F}_{i-1}(x),&&
  2\, T_{i}(x)=U_{i}(x)-U_{i-2}(x),\\
D^{\alpha}_{i}(x)=&E^{\alpha}_{i}(x)-\alpha\, E^{\alpha}_{i-2}(x).
  \end{align*}
  \end{cor}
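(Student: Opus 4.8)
The plan is to obtain each of the five identities by directly specializing the connection formula \eqref{sp2} to the values of the parameters $(a,b)$ recorded in Table 1, and then rewriting the outcome in terms of the classical named polynomials by means of the same table. Since \eqref{sp2}, namely $\psi^{a,b}_{i}(x)=\phi^{a,b}_{i}(x)+b\,\phi^{a,b}_{i-2}(x)$, has already been established, no further analytic work is required: the proof is purely a matter of substitution together with tracking the index shifts encoded in the correspondences of Table 1.

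Concretely, I would treat the five cases one at a time. For the Lucas--Fibonacci pair I would put $(a,b)=(1,1)$ in \eqref{sp2}, obtaining $\psi^{1,1}_{i}(x)=\phi^{1,1}_{i}(x)+\phi^{1,1}_{i-2}(x)$, and then invoke $L_{i}(x)=\psi^{1,1}_{i}(x)$, $F_{i+1}(x)=\phi^{1,1}_{i}(x)$ and $F_{i-1}(x)=\phi^{1,1}_{i-2}(x)$ to read off $L_{i}(x)=F_{i+1}(x)+F_{i-1}(x)$. The remaining four cases are identical in spirit: $(a,b)=(2,1)$ yields the Pell--Pell-Lucas identity; $(a,b)=(3,-2)$ yields the Fermat identity, where the coefficient $b=-2$ produces the factor $-2$ multiplying $\mathcal{F}_{i-1}$; $(a,b)=(2,-1)$ yields the Chebyshev identity; and $(a,b)=(1,-\alpha)$ yields the Dickson identity, the value $b=-\alpha$ supplying the $-\alpha$ coefficient.

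The only two points deserving care---and which I would flag explicitly---are the two nontrivial features of the dictionary in Table 1. First, each Fibonacci-type polynomial carries a shift of one in its index relative to $\phi^{a,b}$, so the terms $\phi^{a,b}_{i}$ and $\phi^{a,b}_{i-2}$ in \eqref{sp2} must be rewritten as $F_{i+1},F_{i-1}$ (and analogously $P_{i+1},P_{i-1}$ and $\mathcal{F}_{i+1},\mathcal{F}_{i-1}$), whereas the second-kind Chebyshev and Dickson polynomials $U_{j}$ and $E^{\alpha}_{j}$ carry no such shift. Second, the first-kind Chebyshev polynomial obeys $T_{j}(x)=\tfrac12\psi^{2,-1}_{j}(x)$, so specializing \eqref{sp2} at $(a,b)=(2,-1)$ gives $2\,T_{i}(x)=U_{i}(x)-U_{i-2}(x)$ rather than $T_{i}(x)=U_{i}(x)-U_{i-2}(x)$; the extra factor $2$ in the stated formula is exactly this normalization. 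With these bookkeeping conventions in hand, all five formulae fall out by inspection, and I anticipate no genuine obstacle beyond keeping the index shifts and the single factor of $2$ straight.
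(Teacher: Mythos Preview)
Your proposal is correct and follows exactly the approach indicated in the paper: the paper simply states that these five formulae are deduced as special cases of relation \eqref{sp2} using Table~1, and you carry this out explicitly, correctly handling both the index shift in the Fibonacci-type correspondences and the factor $\tfrac12$ in the Chebyshev relation $T_j(x)=\tfrac12\psi^{2,-1}_j(x)$.
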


  \section{Connection formulae between two different generalized polynomials in the same class}
  This section is concerned with introducing other new connection formulae. We give connection formulae between two different generalized Fibonacci polynomials. Connection formulae between two generalized Lucas polynomials of different parameters are also given. We will show again that all the connection coefficients are expressed in terms of $_2F_{1}(z)$ for certain $z$.
\begin{thm}
 \label{connecthm3}
  For every non-negative integer $i$, the following connection formula holds
 \bq
 \label{connect3}
\phi^{a,b}_{i}(x)=\left(\dd\frac{a}{r}\right)^i\, \dd\sum_{m=0}^{\left\lfloor
 \frac{i}{2}\right\rfloor}\frac{(-s)^{m}\, \, \binom{i}{m}\, (i-2m+1)}{i-m+1}\ _{2} F_{1}\left.\left(
\begin{array}{cc}
 -m,-i+m-1\\
  -i
\end{array}
\right|\dd\frac{b\, r^2}{a^2\, s}\right)\ \phi^{r,s}_{i-2m}(x).
 \eq
 \end{thm}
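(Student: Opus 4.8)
The plan is to obtain \eqref{connect3} by composing the explicit power-form representation of $\phi^{a,b}_{i}(x)$ with the inversion formula that re-expands monomials in terms of the target polynomials $\phi^{r,s}_{j}(x)$. Concretely, I would first invoke \eqref{powerphi} to write
\[
\phi^{a,b}_{i}(x)=\sum_{k=0}^{\lfloor i/2\rfloor}\binom{i-k}{k}\,b^{k}\,a^{i-2k}\,x^{i-2k}.
\]
Then, for each monomial $x^{i-2k}$, I would apply the inversion formula \eqref{invGenFib} with the parameters $a,b$ replaced by $r,s$ (which is legitimate, since \eqref{invGenFib} holds for arbitrary nonzero reals), namely
\[
x^{i-2k}=r^{-(i-2k)}\sum_{l=0}^{\lfloor (i-2k)/2\rfloor}\frac{(-s)^{l}\binom{i-2k}{l}(i-2k-2l+1)}{i-2k-l+1}\,\phi^{r,s}_{i-2k-2l}(x).
\]
Substituting this into the previous display produces a double sum over $k$ and $l$ in which every term is a multiple of some $\phi^{r,s}_{i-2k-2l}(x)$.

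The second step is to reorganize the double sum by the degree of the output polynomial. Since the index of $\phi^{r,s}$ is $i-2(k+l)$, I would set $m=k+l$ and collect all contributions to $\phi^{r,s}_{i-2m}(x)$, so that the inner summation runs over $k$ from $0$ to $m$ with $l=m-k$ (the binomial constraints are all satisfied because $m\le\lfloor i/2\rfloor$). Using $a^{i-2k}r^{-(i-2k)}=(a/r)^{i}(r^{2}/a^{2})^{k}$ and factoring $(-s)^{m}$ out of $(-s)^{m-k}$, the coefficient of $\phi^{r,s}_{i-2m}(x)$ becomes
\[
\left(\frac{a}{r}\right)^{i}(-s)^{m}(i-2m+1)\sum_{k=0}^{m}\frac{(-1)^{k}}{i-k-m+1}\binom{i-k}{k}\binom{i-2k}{m-k}\left(\frac{b\,r^{2}}{a^{2}\,s}\right)^{k}.
\]

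The third and decisive step is to identify this finite sum with the advertised $_2F_{1}$ evaluated at $z=\dfrac{b\,r^{2}}{a^{2}\,s}$. To do this I would simplify the product of binomials via $\binom{i-k}{k}\binom{i-2k}{m-k}=\dfrac{(i-k)!}{k!\,(m-k)!\,(i-k-m)!}$ and combine it with the factor $1/(i-k-m+1)$ to absorb one more factorial into $(i-k-m+1)!$. Comparing the resulting $k$-th term against the $k$-th term of $\dfrac{\binom{i}{m}}{i-m+1}\,{}_2F_{1}$, the whole verification reduces, after cancelling the common factor $(i-k)!/\bigl(k!\,(m-k)!\bigr)$, to the single Pochhammer identity
\[
(-1)^{k}\,\frac{(i-m+1)!}{(i-k-m+1)!}=(-i+m-1)_{k},
\]
which follows by writing the left-hand ratio as the falling product $\prod_{j=0}^{k-1}(i-m+1-j)$ and absorbing $(-1)^{k}$ into each factor. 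This matching is the main obstacle: the argument stands or falls on the bookkeeping of factorials and Pochhammer symbols, and one must track the boundary index $i-k-m+1$ carefully so that the denominator $i-m+1$ and the numerator parameter $-i+m-1$ emerge correctly. Once this term-by-term identification is established, the finite sum collapses to the stated hypergeometric expression and \eqref{connect3} follows.

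An alternative route, parallel to the proof of Theorem \ref{connecthm2}, would be to proceed by induction on $i$ using the recurrence \eqref{recgenFib} for $\phi^{a,b}_{i}$ together with an auxiliary recurrence for the connection coefficients analogous to Lemma \ref{lem1}; however, the direct power-form/inversion argument above is shorter and avoids establishing a new recurrence lemma.
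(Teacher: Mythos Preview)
Your argument is correct and follows essentially the same route as the paper: it too proves Theorem~\ref{connecthm3} by inserting the inversion formula \eqref{invGenFib} (with parameters $r,s$) into the power-form expansion \eqref{powerphi}, reindexing the resulting double sum by $m=k+l$, and then recognizing the inner sum as the stated ${}_{2}F_{1}$. Your Pochhammer bookkeeping is slightly more explicit than the paper's, but the method is the same, and your closing remark about the alternative inductive approach mirrors exactly the paper's own comment that Theorems~\ref{connecthm3} and~\ref{connecthm4} could also be handled as in Theorem~\ref{connecthm2}.
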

 \begin{thm}
 \label{connecthm4}
  For every non-negative integer $i$, the following connection formula holds
 \bq
 \label{connect4}
\psi^{a,b}_{i}(x)=\left(\dd\frac{a}{r}\right)^i\, \dd\sum_{m=0}^{\left\lfloor
 \frac{i}{2}\right\rfloor}c_{i-2m}\, (-s)^m\,  \binom{i}{m} \ _{2} F_{1}\left.\left(
\begin{array}{cc}
 -m,m-i\\
  1-i
\end{array}
\right|\dd\frac{b\, r^2}{a^2\, s}\right)\ \psi^{r,s}_{i-2m}(x).
 \eq
 \end{thm}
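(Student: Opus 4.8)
The plan is to prove \eqref{connect4} by induction on $i$, following the same scheme used for Theorem \ref{connecthm2}; the one structural difference is that both members of \eqref{connect4} are Lucas-type, so I will invoke the recurrence \eqref{recgenLuc} twice, once for $\psi^{a,b}_i$ and once, in rearranged form, for the target family $\psi^{r,s}_{j}$. Throughout fix $z=\tfrac{b\,r^2}{a^2\,s}$, write $F(\alpha,\beta;\gamma)$ for the terminating Gauss function ${}_2F_1(\alpha,\beta;\gamma\mid z)$ with this fixed argument, and let
\[D_{m,i}=\left(\frac{a}{r}\right)^{i}c_{i-2m}\,(-s)^m\binom{i}{m}\,F(-m,m-i;1-i)\]
denote the proposed coefficient of $\psi^{r,s}_{i-2m}(x)$, so that the claim reads $\psi^{a,b}_i(x)=\sum_{m}D_{m,i}\,\psi^{r,s}_{i-2m}(x)$. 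I would first dispose of the base cases $i=0,1$ by direct substitution (here $\psi^{a,b}_0=\psi^{r,s}_0=2$ and $\psi^{a,b}_1=a\,x=\tfrac{a}{r}\,\psi^{r,s}_1$) and then assume the formula for all indices below $i$.

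For the inductive step I would start from $\psi^{a,b}_i=a\,x\,\psi^{a,b}_{i-1}+b\,\psi^{a,b}_{i-2}$, substitute the induction hypothesis for $\psi^{a,b}_{i-1}$ and $\psi^{a,b}_{i-2}$, and then eliminate the factor $x$ in complete analogy with \eqref{recused}, but using \eqref{recgenLuc} for the target family, namely $x\,\psi^{r,s}_{j-1}=\tfrac{1}{r}\psi^{r,s}_{j}-\tfrac{s}{r}\psi^{r,s}_{j-2}$. After reindexing, the coefficient of a fixed $\psi^{r,s}_{i-2k}(x)$ receives exactly three contributions, and matching it against $D_{k,i}$ reduces the entire step to the single coefficient identity
\[\frac{a}{r}\,D_{k,i-1}-\frac{a\,s}{r}\,D_{k-1,i-1}+b\,D_{k-1,i-2}=D_{k,i},\]
which is the precise analogue of the recurrence \eqref{rec1} of Lemma \ref{lem1}.

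Pulling out the common power $\left(\tfrac{a}{r}\right)^{i}$ and using $b\left(\tfrac{r}{a}\right)^2=s\,z$, this coefficient identity is equivalent to a pure contiguous relation among the Gauss functions,
\[\binom{i-1}{k}\,F(-k,k-i+1;2-i)+\binom{i-1}{k-1}\,F(1-k,k-i;2-i)-z\,\binom{i-2}{k-1}\,F(1-k,k-i+1;3-i)=\binom{i}{k}\,F(-k,k-i;1-i),\]
and this is the step I expect to be the main obstacle. I would establish it, exactly as Lemma \ref{lem1} is established, by extracting the coefficient of $z^{n}$ on both sides: since every ${}_2F_1$ here terminates, the equality collapses to a finite identity among products of Pochhammer symbols and binomial coefficients in the integers $i,k,n$, which can be cleared and verified by a Chu--Vandermonde / telescoping argument.

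Finally I would attend to the boundary bookkeeping, which is where care is genuinely needed. The $m=0$ term and the extreme terms $m=\lfloor(i-1)/2\rfloor$ of the two hypotheses fall outside the range of the collapsed sum and must be matched separately, playing precisely the role of the $\tfrac{r}{a}A_{0,i-1}$, the $A_{\lfloor(i-1)/2\rfloor,i-1}$, and the $\xi_i$ terms that appear in \eqref{ind23}; moreover the parity of $i$ has to be tracked through the floor functions. I would also record the constant-term convention: because $\psi^{r,s}_0=2$, the factors $c_{i-2m}$ (with $c_0=\tfrac12$) correctly reproduce the constant coefficients for every $i\ge1$, so that the only genuinely exceptional value is $i=0$, which is settled directly in the base case.
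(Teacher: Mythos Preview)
Your induction scheme is sound and is in fact one of the two routes the paper explicitly sanctions: the joint proof of Theorems~\ref{connecthm3} and~\ref{connecthm4} opens by remarking that the induction procedure of Theorem~\ref{connecthm2} carries over, and then works out instead an alternative argument based on the explicit power-form expansion~\eqref{powerpsi} together with the inversion formula~\eqref{invGenLuk}. So your proposal matches the paper's first-mentioned method, while the paper's written-out proof (given in detail for Theorem~\ref{connecthm3}, with Theorem~\ref{connecthm4} handled by analogy) takes the second.

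The comparison is worth recording. Your route reduces everything to the three-term coefficient recurrence $\tfrac{a}{r}D_{k,i-1}-\tfrac{as}{r}D_{k-1,i-1}+b\,D_{k-1,i-2}=D_{k,i}$, which you correctly rewrite as a contiguous ${}_2F_1$ identity to be verified termwise; this is mechanical, but the boundary bookkeeping with the floors, the parity of $i$, and the $c_{0}=\tfrac12$ factor is genuinely delicate, as you acknowledge. The paper's alternative bypasses both the auxiliary lemma and the edge cases entirely: one substitutes the inversion~\eqref{invGenLuk} into the power form~\eqref{powerpsi}, interchanges the order of summation, and recognises the inner sum directly as the terminating ${}_2F_1$. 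That argument is shorter and needs no parity split, at the cost of taking the inversion formula as a prerequisite.
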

 \begin{proof}
 Theorems \ref{connecthm3} and \ref{connecthm4} can be proved using similar procedures followed in the proof of Theorem \ref{connecthm2}, but we give here another strategy of proof which is built on making use of the power form representation of each of $\phi^{a,b}_{i}(x),\psi^{a,b}_{i}(x),\ i\ge 0,$ and their inversion formulae. Now, we are going to prove Theorem \ref{connecthm3}. Making use of the analytic form of $\phi^{a,b}_{i}(x)$ in \eqref{powerphi} together with its inversion formula in \eqref{invGenFib} leads to the following formula
  \[\begin{split}
  \phi^{a,b}_{i}(x)=&\sum _{r=0}^{\left\lfloor \frac{i}{2}\right\rfloor }\ b^r\, \binom{i-r}{r} \left(\frac{a}{r}\right)^{i-2 r}\,
  \sum _{\ell=0}^{\left\lfloor \frac{i}{2}\right\rfloor -r}\ \frac{(-s)^\ell (i-2 \ell-2 r+1) \binom{i-2 r}{\ell}}{i-\ell-2 r+1}\ \phi^{r,s}_{i-2\ell-2\, r}(x).\end{split}\]
  Rearranging the right-hand side of the last relation, and performing some lengthy manipulations enables one to obtain the following relation
     \bq
     \label{conncstep}
     \begin{split}
 \phi^{a,b}_{i}(x)=&\sum _{m=0}^{\left\lfloor \frac{i}{2}\right\rfloor } (2m-i-1)\, \sum _{r=0}^{m}\frac{(-1)^{m+r}\, b^r\, \left(\frac{a}{r}\right)^{i-2 r} \, s^{m-r} \, \binom{i-r}{r}\, \binom{i-2 r}{m-r}}{m-i+r-1}\ \phi^{r,s}_{i-2m}(x).\end{split}
 \eq
   It is not difficult to see that
   \[\begin{split}
   &\sum _{r=0}^{m}\frac{(-1)^{m+r}\, b^r\, \left(\frac{a}{r}\right)^{i-2 r} \, s^{m-r} \, \binom{i-r}{r}\, \binom{i-2 r}{m-r}}{m-i+r-1}\\
   &=\frac{(-1)^{m+1}\, s^m\,  \left(\frac{a}{r}\right)^i\, \binom{i}{m}}{i-m+1}
     \ _{2} F_{1}\left.\left(
\begin{array}{cc}
 -m,-i+m-1\\
  -i
\end{array}
\right|\dd\frac{b\, r^2}{a^2\, s}\right),\end{split}\]
and consequently, relation \eqref{conncstep} yields the connection formula
\bq
\phi^{a,b}_{i}(x)=\left(\dd\frac{a}{r}\right)^i\, \dd\sum_{m=0}^{\left\lfloor
 \frac{i}{2}\right\rfloor}\frac{(-s)^{m}\, \, \binom{i}{m}\, (i-2m+1)}{i-m+1}\ _{2} F_{1}\left.\left(
\begin{array}{cc}
 -m,-i+m-1\\
  -i
\end{array}
\right|\dd\frac{b\, r^2}{a^2\, s}\right)\ \phi^{r,s}_{i-2m}(x).
 \eq
 \end{proof}
\begin{rem}
 Several connection formulae can be deduced as special cases of the two connection formulae in \eqref{connect3} and \eqref{connect4}. In fact, there are forty relations that can be obtained.
 In the following, we give some of these formulae.

 \end{rem}
 \begin{cor}
   If we set $r=2,s=-1$ in relation \eqref{connect3}, then we obtain
 \bq
 \label{connect3Sp1}
\phi^{a,b}_{i}(x)=\left(\dd\frac{a}{2}\right)^i\, \dd\sum_{m=0}^{\left\lfloor
 \frac{i}{2}\right\rfloor}\frac{\binom{i}{m}\, (i-2m+1)}{i-m+1}\ _{2} F_{1}\left.\left(
\begin{array}{cc}
 -m,-i+m-1\\
  -i
\end{array}
\right|\dd\frac{4\, b}{-a^2}\right)\ U_{i-2m}(x).
 \eq
 \end{cor}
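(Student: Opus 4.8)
The plan is to obtain formula \eqref{connect3Sp1} as an immediate specialization of the general connection formula \eqref{connect3} established in Theorem \ref{connecthm3}. Since \eqref{connect3} already expresses $\phi^{a,b}_{i}(x)$ in the basis $\{\phi^{r,s}_{i-2m}(x)\}$ with explicit $_2F_1$ connection coefficients valid for all nonzero $r,s$, the entire argument consists of inserting the particular values $r=2$ and $s=-1$ and then reading off the right-hand side with the help of Table 1. No new induction, inversion, or hypergeometric summation is required beyond what Theorem \ref{connecthm3} supplies.

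Concretely, I would substitute $r=2$ and $s=-1$ into \eqref{connect3} factor by factor. The prefactor $\left(\frac{a}{r}\right)^i$ becomes $\left(\frac{a}{2}\right)^i$; the sign factor $(-s)^m=(-(-1))^m=1$ drops out entirely; and the argument of the hypergeometric function, $\frac{b\,r^2}{a^2\,s}$, collapses to $\frac{4\,b}{-a^2}$. The remaining rational coefficient $\frac{\binom{i}{m}\,(i-2m+1)}{i-m+1}$ and the three parameters $\left(-m,\,-i+m-1;\,-i\right)$ of the $_2F_1$ do not involve $r$ or $s$, so they carry over unchanged. Finally, Table 1 identifies the target basis: the Chebyshev polynomials of the second kind satisfy $U_j(x)=\phi^{2,-1}_{j}(x)$, hence $\phi^{r,s}_{i-2m}(x)=\phi^{2,-1}_{i-2m}(x)=U_{i-2m}(x)$, and replacing each summand accordingly reproduces \eqref{connect3Sp1} verbatim.

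There is essentially no analytical obstacle in this corollary; the only points meriting care are purely clerical. One must track the signs correctly, in particular verifying that $s=-1$ sends $\frac{b\,r^2}{a^2\,s}$ to $-\frac{4\,b}{a^2}$ (as written in \eqref{connect3Sp1}) rather than to $+\frac{4\,b}{a^2}$, and that $(-s)^m$ becomes $1$ and not $(-1)^m$. The second point of care is selecting the correct row of Table 1 so that $\phi^{2,-1}$ is matched to $U$ rather than to a Lucas-type polynomial. As a sanity check one may verify the identity for a small index such as $i=2$ or $i=3$, but no genuinely new computation is needed beyond the statement of Theorem \ref{connecthm3}.
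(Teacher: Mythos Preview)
Your proposal is correct and follows exactly the approach the paper intends: the corollary is stated without a separate proof precisely because it is an immediate specialization of \eqref{connect3} with $r=2$, $s=-1$, together with the identification $U_j(x)=\phi^{2,-1}_j(x)$ from Table~1. Your careful tracking of the sign in $(-s)^m$ and in the hypergeometric argument matches what is needed.
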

 \begin{cor}
 \label{Coro5}
   If we set $a=2,b=-1$ in relation \eqref{connect3}, then we obtain
  \bq
 \label{connect3Sp2}
U_{i}(x)=\left(\dd\frac{2}{r}\right)^i\, \dd\sum_{m=0}^{\left\lfloor
 \frac{i}{2}\right\rfloor}\frac{(-s)^{m}\, \, \binom{i}{m}\, (i-2m+1)}{i-m+1}\ _{2} F_{1}\left.\left(
\begin{array}{cc}
 -m,-i+m-1\\
  -i
\end{array}
\right|\dd\frac{-r^2}{4\, s}\right)\ \phi^{r,s}_{i-2m}(x).
 \eq
 In particular, and if we set $a=b=1$ in \eqref{connect3Sp1}, and $r=s=1$ in \eqref{connect3Sp2}, then the following two relations are obtained:
 \bq
 \label{connect3Sp3}
F_{i+1}(x)=\left(\dd\frac{1}{2}\right)^i\, \dd\sum_{m=0}^{\left\lfloor
 \frac{i}{2}\right\rfloor}\frac{\binom{i}{m}\, (i-2m+1)}{i-m+1}\ _{2} F_{1}\left.\left(
\begin{array}{cc}
 -m,-i+m-1\\
  -i
\end{array}
\right|-4\right)\ U_{i-2m}(x),
 \eq
 and
  \bq
 \label{connect3Sp4}
U_{i}(x)=2^i\, \dd\sum_{m=0}^{\left\lfloor
 \frac{i}{2}\right\rfloor}\frac{(-1)^{m}\, \, \binom{i}{m}\, (i-2m+1)}{i-m+1}\ _{2} F_{1}\left.\left(
\begin{array}{cc}
 -m,-i+m-1\\
  -i
\end{array}
\right|\dd\frac{-1}{4}\right)\ F_{i-2m+1}(x).
 \eq
 \end{cor}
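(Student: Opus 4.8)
The plan is to obtain all three displayed formulae by direct specialization of the master connection formula \eqref{connect3} established in Theorem~\ref{connecthm3}, combined with the identifications of special polynomials recorded in Table~1. No fresh summation is required; in contrast to Corollary~\ref{Cor1}, the Gauss argument $\frac{b\,r^2}{a^2\,s}$ does not reduce to $1$ under these substitutions, so the factor $_2F_1$ cannot be collapsed by the Chu--Vandermonde identity and must be retained throughout.

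First I would derive \eqref{connect3Sp2}. Setting $a=2,\ b=-1$ in \eqref{connect3}, the left-hand side $\phi^{2,-1}_{i}(x)$ is exactly $U_{i}(x)$ by Table~1. Under this choice the prefactor $\left(\tfrac{a}{r}\right)^{i}$ becomes $\left(\tfrac{2}{r}\right)^{i}$, while the Gauss argument $\frac{b\,r^2}{a^2\,s}=\frac{-r^2}{4\,s}$; the binomials, the sign $(-s)^m$, and the right-hand polynomials $\phi^{r,s}_{i-2m}(x)$ are untouched since $r,s$ remain free. This yields \eqref{connect3Sp2} verbatim.

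Next, the two particular cases follow by a second specialization of the already-specialized formulae. For \eqref{connect3Sp3} I would start from \eqref{connect3Sp1} (the $r=2,\ s=-1$ instance of \eqref{connect3}) and put $a=b=1$: then $\phi^{1,1}_{i}(x)=F_{i+1}(x)$, the prefactor $\left(\tfrac{a}{2}\right)^{i}$ becomes $\left(\tfrac{1}{2}\right)^{i}$, and the argument $\frac{4\,b}{-a^2}$ collapses to $-4$, giving \eqref{connect3Sp3}. Symmetrically, for \eqref{connect3Sp4} I would start from \eqref{connect3Sp2} and set $r=s=1$: here $\phi^{1,1}_{i-2m}(x)=F_{i-2m+1}(x)$, the prefactor $\left(\tfrac{2}{r}\right)^{i}$ becomes $2^{i}$, the sign $(-s)^m$ becomes $(-1)^m$, and the argument $\frac{-r^2}{4\,s}$ becomes $-\tfrac{1}{4}$.

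There is no genuine obstacle here, since each step is a substitution into an identity proved earlier. The only points demanding care are bookkeeping ones: keeping the degree shifts in Table~1 consistent (the Fibonacci identification $F_{j+1}(x)=\phi^{1,1}_{j}(x)$ carries an index shift that the Chebyshev identification $U_{j}(x)=\phi^{2,-1}_{j}(x)$ does not), and confirming that the three Gauss arguments $-\tfrac{r^2}{4s}$, $-4$, and $-\tfrac{1}{4}$ are produced correctly from $\frac{b\,r^2}{a^2\,s}$ after each substitution. Verifying that these arguments are generically different from $1$ also explains why, unlike in Corollary~\ref{Cor1}, no closed-form simplification of the $_2F_1$ is available.
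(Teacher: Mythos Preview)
Your proposal is correct and matches the paper's approach exactly: the corollary is obtained by direct substitution of the indicated parameter values into \eqref{connect3} and \eqref{connect3Sp1}, together with the identifications from Table~1, and the paper offers no argument beyond this. Your additional remarks on the index shift for Fibonacci polynomials and on why the $_2F_1$ does not collapse are accurate and go slightly beyond what the paper makes explicit.
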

 \begin{rem}
  The two relations in \eqref{connect3Sp3} and \eqref{connect3Sp4} are in agreement with those recently developed in \cite{RamFib}.
 \end{rem}
 \begin{cor}
  If we set $r=2,s=-1$  in \eqref{connect4}, then we obtain
 \bq
 \label{connect4Sp11}
\psi^{a,b}_{i}(x)=2\, \left(\dd\frac{a}{2}\right)^i\, \dd\sum_{m=0}^{\left\lfloor
 \frac{i}{2}\right\rfloor}c_{i-2m}\,  \binom{i}{m} \ _{2} F_{1}\left.\left(
\begin{array}{cc}
 -m,m-i\\
  1-i
\end{array}
\right|\dd\frac{-4\, b}{a^2}\right)\ T_{i-2m}(x).
 \eq
 \end{cor}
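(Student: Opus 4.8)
The plan is to obtain \eqref{connect4Sp11} by the direct specialization $r=2$, $s=-1$ of the general connection formula \eqref{connect4}, which is already established by Theorem \ref{connecthm4}. Since \eqref{connect4} is valid for all nonzero real parameters $a,b,r,s$, it remains valid for the particular choice $r=2,\,s=-1$, so no further induction or manipulation of the hypergeometric series is required; the entire argument reduces to evaluating each factor in \eqref{connect4} at these values and then re-expressing the polynomial appearing on the right-hand side via Table~1.

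First I would substitute $r=2$ and $s=-1$ into the places where $r$ and $s$ occur in \eqref{connect4}. The prefactor $\left(\dd\frac{a}{r}\right)^i$ becomes $\left(\dd\frac{a}{2}\right)^i$; the sign factor $(-s)^m$ becomes $(-(-1))^m=1^m=1$ and therefore disappears from the summand; and the hypergeometric argument $\dd\frac{b\,r^2}{a^2\,s}$ becomes $\dd\frac{4\,b}{-a^2}=\dd\frac{-4\,b}{a^2}$. The top and bottom parameters $-m,\,m-i,\,1-i$ of the $_2F_1$ do not involve $r$ or $s$, hence they are unchanged. After these replacements the formula reads
\[
\psi^{a,b}_{i}(x)=\left(\dd\frac{a}{2}\right)^i\, \dd\sum_{m=0}^{\left\lfloor
 \frac{i}{2}\right\rfloor}c_{i-2m}\, \binom{i}{m} \ _{2} F_{1}\left.\left(
\begin{array}{cc}
 -m,m-i\\
  1-i
\end{array}
\right|\dd\frac{-4\, b}{a^2}\right)\ \psi^{2,-1}_{i-2m}(x).
\]

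The final step is to rewrite the generalized Lucas polynomial $\psi^{2,-1}_{i-2m}(x)$ on the right in terms of the first-kind Chebyshev polynomial. By the corresponding entry in Table~1, $T_j(x)=\dd\frac{1}{2}\,\psi^{2,-1}_{j}(x)$, equivalently $\psi^{2,-1}_{j}(x)=2\,T_j(x)$. Applying this with $j=i-2m$ inside the sum and pulling the constant factor $2$ outside the summation yields precisely \eqref{connect4Sp11}. I do not expect any genuine obstacle here: the only points requiring care are keeping track of the factor $2$ produced by the normalization in Table~1, which accounts for the leading coefficient $2$ in the claimed formula, and the cancellation of the sign $(-s)^m$ to $1$, which explains why no power of $(-1)$ survives in the summand.
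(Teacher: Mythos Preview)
Your proposal is correct and follows exactly the approach implied by the paper, namely direct substitution of $r=2$, $s=-1$ into \eqref{connect4} together with the Table~1 identification $\psi^{2,-1}_{j}(x)=2\,T_j(x)$. The paper in fact gives no separate proof for this corollary, so your detailed tracking of each factor (in particular the disappearance of $(-s)^m$ and the origin of the leading $2$) is entirely adequate.
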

\section{Two applications of the introduced connection formulae}
  In this section, we present two applications of the connection formulae derived in Sections 3 and 4. In the first, we give some new relations between some celebrated numbers and in the second application, some new definite weighted integrals are given.
  \subsection{Formulae between some celebrated numbers}
  Theorems \ref{connecthm1}-\ref{connecthm4} enable one to join two families of numbers whether they either belong to the same class of generalized Fibonacci or Lucas numbers or they belong to two different such classes. With respect to the six families of numbers $\{F_{i}\}_{i\ge 0},\{P_{i}\}_{i\ge 0},\{\mathcal{F}_{i}\}_{i\ge 0},\{L_{i}\}_{i\ge 0},\{Q_{i}\}_{i\ge 0},\{f_{i}\}_{i\ge 0}$, defined in Table 1, there are thirty relations link between them. All the coefficients between them involve $_2F_{1}(z)$ for certain $z$. In six cases only, the appearing $_2F_{1}(z)$ can be summed. They are given explicitly in the following Corollary.
  \begin{cor}
  For every non-negative integer $i$, the following identities are valid:
   \begin{align*}
F_{i+1}=&\dd\sum_{m=0}^{\left\lfloor
 \frac{i}{2}\right\rfloor}c_{i-2 m}\, (-1)^m\, L_{i-2m},&&
P_{i+1}=\dd\sum_{m=0}^{\left\lfloor
 \frac{i}{2}\right\rfloor}c_{i-2 m}\, (-1)^m\, Q_{i-2m},\\
\mathcal{F}_{i+1}=&\dd\sum_{m=0}^{\left\lfloor
 \frac{i}{2}\right\rfloor}c_{i-2 m}\, 2^m\, f_{i-2m},&& L_{i}=F_{i+1}+F_{i-1},\\
Q_{i}=&P_{i+1}+P_{i-1},&&f_{i}=\mathcal{F}_{i+1}-2\, \mathcal{F}_{i-1}.
\end{align*}
\end{cor}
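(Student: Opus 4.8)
The plan is to obtain every identity in the statement as the $x=1$ specialization of a polynomial identity already established in Section 3. First I would record the elementary but crucial fact that each of the six number families is the value at $x=1$ of the corresponding polynomial family in Table 1; that is, $F_i=F_i(1)$, $P_i=P_i(1)$, $\mathcal{F}_i=\mathcal{F}_i(1)$, $L_i=L_i(1)$, $Q_i=Q_i(1)$, and $f_i=f_i(1)$. This is verified directly from the defining recurrences: putting $x=1$ in \eqref{recgenFib} with $(a,b)=(1,1),(2,1),(3,-2)$ reproduces exactly the integer recurrences and initial data of the Fibonacci, Pell, and Fermat numbers, and likewise putting $x=1$ in \eqref{recgenLuc} with $(r,s)=(1,1),(2,1),(3,-2)$ reproduces those of the Lucas, Pell--Lucas, and Fermat--Lucas numbers.

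Next I would simply evaluate at $x=1$ the six relevant polynomial connection formulae. The three identities $F_{i+1}=\sum_{m}c_{i-2m}(-1)^mL_{i-2m}$, $P_{i+1}=\sum_{m}c_{i-2m}(-1)^mQ_{i-2m}$, and $\mathcal{F}_{i+1}=\sum_{m}c_{i-2m}2^mf_{i-2m}$ arise by setting $x=1$ in the first three formulae of Corollary~\ref{CorSp1}, while the three identities $L_i=F_{i+1}+F_{i-1}$, $Q_i=P_{i+1}+P_{i-1}$, and $f_i=\mathcal{F}_{i+1}-2\mathcal{F}_{i-1}$ arise by setting $x=1$ in the first three formulae of Corollary~\ref{CorSp2}. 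Since the coefficients $c_{i-2m}$, $(-1)^m$, and $2^m$ are constants independent of $x$, they are unaffected by the substitution, so each polynomial identity passes verbatim to the corresponding numerical identity.

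There is essentially no hard step here: the entire content is the specialization $x\mapsto 1$, and the statement is a corollary of Corollaries~\ref{CorSp1} and~\ref{CorSp2} rather than a genuinely new computation. The only point requiring any care---though it is routine---is confirming the normalizations $F_i=F_i(1),\dots,f_i=f_i(1)$, i.e.\ that the integer sequences named in the statement are precisely the polynomial values at the argument $x=1$ and not at some other point; this is settled once and for all by the recurrence-and-initial-condition check described in the first paragraph. I therefore expect the ``obstacle'' to be purely bookkeeping: making sure each of the six numerical relations is paired with the correct parent polynomial formula and the correct parameter choice from Table 1.
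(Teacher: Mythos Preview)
Your proposal is correct and matches the paper's own (implicit) approach: the paper presents this corollary without a written proof, but the surrounding discussion makes clear that these six numerical identities are precisely the $x=1$ specializations of the polynomial formulae in Corollaries~\ref{CorSp1} and~\ref{CorSp2}, which in turn are the cases of Theorems~\ref{connecthm1}--\ref{connecthm2} where the ${}_2F_1$ sums in closed form. Your bookkeeping remark about verifying $F_i=F_i(1)$, etc., is the only point the paper leaves tacit, and you handle it correctly.
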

 Now, with respect to the remaining twenty-four relations among the celebrated sequences of numbers, they can be deduced easily by setting $x=1$ in the relations of Theorems \ref{connecthm1}-\ref{connecthm4} taking into consideration the different special cases of the two generalized polynomials $\{\phi^{a,b}_{i}(x)\}_{i\ge 0}$ and $\{\psi^{a,b}_{i}(x)\}_{i\ge 0}$.
 \subsection{Some definite weighted integrals}
In this section, and based on the connection formulae developed in Sections 3 and 4, some definite weighted integrals are given in terms of certain hypergeometric functions of the type $_2F_{1}(z)$. Some of these results are given in the following two corollaries.
 \begin{cor}
     For all non-negative integers $i,j$ and $j\ge i$, the following two integrals
   formulae hold:
   \bq
  \begin{split}
  \label{IntegraL1}
 \displaystyle\int\limits_{-1}^{1}\dd\frac{1}{\sqrt{1-x^2}}\ \phi^{a,b}_{i}(x)\  T_j(x)\, dx=
  \begin{cases}
  \pi\, \left(\dd\frac{a}{2}\right)^i \, (-1)^{i+j}\, \dd\binom{i}{\frac{i-j}{2}} \,  _{2}F_{1}\left.\left(\begin{array}{cc}
 -\left(\frac{i+j}{2}\right)\,\frac{j-i}{2}\\ -i
\end{array}
\right|\frac{-4\, b}{a^2}\right),& \textrm{if }(i+j)\ \text{even},\\
 0,& \text{otherwise},
 \end{cases}
 \end{split}
 \eq
 and
 \bq
  \begin{split}
  \label{IntegraL2}
\displaystyle\int\limits_{-1}^{1}\sqrt{1-x^2}\ \psi^{r,s}_{i}(x)\  U_j(x)\, dx=
    \begin{cases}
  \frac{\pi\,  i\, \left(\dd\frac{r}{2}\right)^j \dd\binom{\frac{i+j}{2}}{\frac{i-j}{2}} s^{\frac{i-j}{2}}}{i+j}\, \ _{2}F_{1}\left.\left(\begin{array}{cc}
 \frac{j-i}{2},\frac{i+j}{2}\\ j+2
\end{array}
\right|\frac{-r^2}{4\, s}\right),& \textrm{if }(i+j)\ \text{even},\\
 0,& \text{otherwise}.
 \end{cases}
 \end{split}
 \eq
  \end{cor}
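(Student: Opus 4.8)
The plan is to reduce each integral to a single application of Chebyshev orthogonality after rewriting the generalized-polynomial factor through the connection formulae of Section~3. The only orthogonality facts I will need are $\int_{-1}^{1}\frac{T_m(x)T_n(x)}{\sqrt{1-x^2}}\,dx=\frac{\pi}{2}(1+\delta_{m,0})\,\delta_{m,n}$ and $\int_{-1}^{1}\sqrt{1-x^2}\,U_m(x)U_n(x)\,dx=\frac{\pi}{2}\,\delta_{m,n}$.

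For the first integral, the weight $\frac{1}{\sqrt{1-x^2}}$ is the one for which $\{T_n\}$ is orthogonal, so the idea is to expand $\phi^{a,b}_i(x)$ in the $T$-basis. This is furnished by Theorem~\ref{connecthm1}: setting $r=2,\ s=-1$ in \eqref{connect1} and using $\psi^{2,-1}_{k}(x)=2\,T_k(x)$ from Table~1 (so that $(-s)^m=1$ and $\frac{b\,r^2}{a^2 s}=\frac{-4b}{a^2}$) expresses $\phi^{a,b}_i$ as $\left(\frac a2\right)^i\sum_m 2\,c_{i-2m}\binom{i}{m}\,{}_2F_1(\ldots)\,T_{i-2m}(x)$. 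Multiplying by $\frac{T_j(x)}{\sqrt{1-x^2}}$ and integrating term by term, orthogonality kills every summand except the one with $i-2m=j$, i.e.\ $m=\frac{i-j}{2}$. This index is admissible exactly when $i+j$ is even and $i\ge j$; otherwise the integral vanishes, giving the ``otherwise'' branch. For the surviving term I would combine the factor $2c_j$ with the normalization $\frac{\pi}{2}(1+\delta_{j,0})$: the extra $\frac12$ carried by $c_0$ when $j=0$ cancels the extra factor $2$ in $1+\delta_{j,0}$, so that both $j=0$ and $j\ge1$ produce the single uniform constant $\pi\left(\frac a2\right)^i\binom{i}{(i-j)/2}$. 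Finally I would use the symmetry ${}_2F_1(\alpha,\beta;\gamma;z)={}_2F_1(\beta,\alpha;\gamma;z)$ to turn the numerator parameters $-m=\frac{j-i}{2}$ and $m-i=-\frac{i+j}{2}$ into the order displayed in \eqref{IntegraL1}. On this non-trivial branch $i+j$ is even, so the cosmetic factor $(-1)^{i+j}$ equals $1$.

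For the second integral the weight $\sqrt{1-x^2}$ is adapted to $\{U_n\}$, so here I expand the Lucas-type factor $\psi^{r,s}_i(x)$ in the $U$-basis. Since $U_k(x)=\phi^{2,-1}_k(x)$, this is exactly Theorem~\ref{connecthm2} with $a=2,\ b=-1$ (whence $\frac{b\,r^2}{a^2 s}=\frac{-r^2}{4s}$), which writes $\psi^{r,s}_i$ as $i\sum_m \frac{s^m (r/2)^{i-2m}\binom{i-m}{m}}{i-m}\,{}_2F_1(\ldots)\,U_{i-2m}(x)$; here I would use the simplified coefficient with denominator $i-m$ that actually emerges at the very end of the proof of Theorem~\ref{connecthm2}. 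Pairing with $\sqrt{1-x^2}\,U_j(x)$ and integrating, orthogonality again isolates $m=\frac{i-j}{2}$, legitimate precisely when $i+j$ is even and $i\ge j$. Substituting $i-m=\frac{i+j}{2}$ and $\binom{i-m}{m}=\binom{(i+j)/2}{(i-j)/2}$, and multiplying by $\frac{\pi}{2}$, reproduces the stated constant $\frac{\pi\,i\,(r/2)^j\binom{(i+j)/2}{(i-j)/2}\,s^{(i-j)/2}}{i+j}$, while the hypergeometric argument and parameters carry over verbatim to match \eqref{IntegraL2}.

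The only genuinely delicate point I anticipate is the bookkeeping in the first case: getting the $T$-orthogonality normalization to mesh cleanly with the weights $c_{i-2m}$ so that the $j=0$ boundary term and the generic $j\ge1$ terms collapse into one formula. Everything else is routine term-by-term integration once the correct connection formula is inserted; I would also flag explicitly that the hypothesis governing a non-vanishing value is $i\ge j$ with $i\equiv j\pmod 2$, which is what makes the binomial coefficients $\binom{i}{(i-j)/2}$ and $\binom{(i+j)/2}{(i-j)/2}$ well defined.
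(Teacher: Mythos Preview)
Your proposal is correct and follows exactly the paper's own route: specialize \eqref{connect1} with $r=2,\ s=-1$ and apply $T$-orthogonality for \eqref{IntegraL1}, and specialize \eqref{connect2} with $a=2,\ b=-1$ and apply $U$-orthogonality for \eqref{IntegraL2}. Your additional bookkeeping (the $c_{0}$/$T_{0}$ normalization cancellation, using the coefficient with denominator $i-m$ from the end of the proof of Theorem~\ref{connecthm2}, and flagging that the non-vanishing case actually needs $i\ge j$) is sound and simply fills in details the paper's one-line proof leaves implicit.
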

  \begin{proof}
  If we set $r=2,s=-1$, in relation \eqref{connect1}, then after making use of the orthogonality relation of $T_{j}(x)$, relation \eqref{IntegraL1} can be obtained. Relation \eqref{IntegraL2} can be obtained by setting $a=2,b=-1$  in relation \eqref{connect2} and making use of the orthogonality relation of $U_{j}(x)$.
  \end{proof}
   \section{Reduction of some odd and even radicals}
   In this section, we introduce some new reduction formulae of certain odd and even radicals through the employment of the two classes of generalized Fibonacci and Lucas polynomials introduced in Section 2. The problems of the reduction of radicals are of interest. There are considerable contributions regarding the reduction of various kinds of radicals (see for example,
\cite{witula2010cardano,dattoli2015cardan,landau1992simplification,
osler2001cardan,berndt1997ramanujan,berndt1998radicals}).  First, the following lemma is basic in the sequel.
\begin{lem}
   \label{lem2}
   For every non-negative integer $k$, the following formula is valid:
   \bq
   \label{squarere}
    (a^2\, x^2+4\, b)\left(\phi^{a,b}_{k-1}(x)\right)^2-\left(\psi^{a,b}_{k}(x)\right)^2=4\, (-1)^{k+1}\, b^k.
   \eq
   \end{lem}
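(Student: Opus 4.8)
The plan is to establish the identity via the Binet formulae \eqref{BinetFib} and \eqref{BinetLuc}, which express both $\phi^{a,b}_{k-1}(x)$ and $\psi^{a,b}_{k}(x)$ in terms of the two roots
\[
\alpha=\frac{a\,x+\sqrt{a^2x^2+4b}}{2},\qquad \beta=\frac{a\,x-\sqrt{a^2x^2+4b}}{2},
\]
of the characteristic equation $t^2=a\,x\,t+b$ associated with the common recurrence. First I would record the two facts that follow immediately from Vieta's formulae: the product $\alpha\beta=-b$ and the difference $\alpha-\beta=\sqrt{a^2x^2+4b}$. In this notation the Binet formulae read $\phi^{a,b}_{k-1}(x)=\dfrac{\alpha^{k}-\beta^{k}}{\alpha-\beta}$ and $\psi^{a,b}_{k}(x)=\alpha^{k}+\beta^{k}$, so the left-hand side of \eqref{squarere} becomes
\[
(\alpha-\beta)^2\left(\frac{\alpha^{k}-\beta^{k}}{\alpha-\beta}\right)^2-\left(\alpha^{k}+\beta^{k}\right)^2
=\left(\alpha^{k}-\beta^{k}\right)^2-\left(\alpha^{k}+\beta^{k}\right)^2,
\]
where I have used $a^2x^2+4b=(\alpha-\beta)^2$ to absorb the weight in front of the first term.

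The next step is purely algebraic: expanding the difference of two squares collapses the expression to $-4\,\alpha^{k}\beta^{k}$. Substituting $\alpha\beta=-b$ gives $-4\,(-b)^{k}=4\,(-1)^{k+1}b^{k}$, which is exactly the right-hand side of \eqref{squarere}. This completes the argument and shows that no delicate case analysis on the parity of $k$ is needed beyond the sign carried by $(-1)^{k+1}$.

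The only genuine obstacle I anticipate is the bookkeeping of the shifted index in $\phi^{a,b}_{k-1}$: the Binet formula \eqref{BinetFib} as written carries exponent $j+1$, so care is required to confirm that the degree-$(k-1)$ polynomial really contributes the factor $(\alpha^{k}-\beta^{k})/(\alpha-\beta)$ and not an off-by-one variant. Once that matching is pinned down, the identity is a one-line consequence of the difference-of-squares factorization. As an alternative I would keep in reserve a direct induction on $k$ using the shared recurrence \eqref{recgenFib}--\eqref{recgenLuc}, checking the base case $k=0$ (equivalently $k=1$) and propagating the relation $4(-1)^{k+1}b^{k}=-b\cdot 4(-1)^{k}b^{k-1}$; this would avoid square roots entirely but at the cost of a longer computation, so the Binet route is the cleaner primary plan.
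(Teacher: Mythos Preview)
Your proposal is correct and follows essentially the same route as the paper: rewrite $\phi^{a,b}_{k-1}$ and $\psi^{a,b}_{k}$ via the Binet formulae in terms of $\alpha$ and $\beta$, use $a^2x^2+4b=(\alpha-\beta)^2$ to reduce the left side to $(\alpha^{k}-\beta^{k})^2-(\alpha^{k}+\beta^{k})^2=-4(\alpha\beta)^k$, and then substitute $\alpha\beta=-b$. Your handling of the index shift in $\phi^{a,b}_{k-1}$ is also correct, so there is no gap.
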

   \begin{proof}
       The two Binet's formulae \eqref{BinetFib} and \eqref{BinetLuc} can be rewritten in the form
 \bq
 \label{phiasfrac}
 \phi^{a,b}_{k-1}(x)=\dd\frac{\left(\alpha(x)\right)^k-\left(\beta(x)\right)^k}{\alpha(x)-\beta(x)},
 \eq
 and
  \bq
   \label{psiasfrac}
 \psi^{a,b}_{k}(x)=\left(\alpha(x)\right)^k+\left(\beta(x)\right)^k,
 \eq
 where
 \bq
 \label{alphabeta}
  \alpha(x)=\frac{a\, x+\sqrt{a^2\, x^{2}+4\, b}}{2},\qquad \beta(x)=\frac{a\, x -\sqrt{a^2\, x^{2}+4\, b}}{2},
  \eq
 and consequently, we have
        \[\begin{split}
    (a^2\, x^2+4\, b)\left(\phi^{a,b}_{k-1}(x)\right)^2-\left(\psi^{a,b}_{k}(x)\right)^2=
   \left\{(\alpha(x))^k-(\beta(x))^k\right\}^2\\
     -\left\{(\alpha(x))^k+(\beta(x))^k\right\}^2
 =-4\left(\alpha(x)\, \beta(x)\right)^k=4\, (-1)^{k+1}\, b^k.
 \end{split}\]
  Lemma \ref{lem2} is now proved.
    \end{proof}
\subsection{New formulae of some odd radicals}
We state and prove the following theorem in which we show how to reduce some odd radicals using the two generalized Fibonacci and Lucas polynomials.
   \begin{thm}
   \label{thmoddradicaL}
Let $k$ be any positive odd integer. Then for every $x\in
\mathbb{R}$ such that $(ax)^{2}\geq-4b$, the following two identities
hold:
    \bq
    \label{oddre}
    \sqrt[k]{\tfrac{\psi^{a,b}_{k}(x)}{2}+\sqrt{\tfrac{\left(\psi^{a,b}_{k}(x)\right)^2}{4}+b^k}}
    +\sqrt[k]{\tfrac{\psi^{a,b}_{k}(x)}{2}-\sqrt{\tfrac{\left(\psi^{a,b}_{k}(x)\right)^2}{4}+b^k}}=a\, x,
    \eq
    and
        \bq
    \label{oddre1}
    \sqrt[k]{\tfrac{\psi^{a,b}_{k}(x)}{2}+\sqrt{\tfrac{\left(\psi^{a,b}_{k}(x)\right)^2}{4}+b^k}}
    -\sqrt[k]{\tfrac{\psi^{a,b}_{k}(x)}{2}-\sqrt{\tfrac{\left(\psi^{a,b}_{k}(x)\right)^2}{4}+b^k}}=\sqrt{a^2\, x^2+4\, b}.
    \eq
\end{thm}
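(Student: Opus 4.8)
The plan is to reduce both identities to the Binet-type representation already established in the proof of Lemma~\ref{lem2}. Writing $\alpha=\alpha(x)$ and $\beta=\beta(x)$ as in \eqref{alphabeta}, I would first record the three elementary relations $\alpha+\beta=a\,x$, $\alpha-\beta=\sqrt{a^2x^2+4b}$, and $\alpha\,\beta=\frac{(ax)^2-(a^2x^2+4b)}{4}=-b$. The hypothesis $(ax)^2\ge -4b$ guarantees that the discriminant $a^2x^2+4b$ is nonnegative, so $\alpha$ and $\beta$ are real with $\alpha\ge\beta$, and by \eqref{psiasfrac} we have $\psi^{a,b}_k(x)=\alpha^k+\beta^k$.

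Next I would exploit that $k$ is odd. Then $(\alpha\beta)^k=(-b)^k=-b^k$, so the two real numbers $\alpha^k$ and $\beta^k$ satisfy $\alpha^k+\beta^k=\psi^{a,b}_k(x)$ and $\alpha^k\beta^k=-b^k$; that is, they are precisely the roots of the quadratic $t^2-\psi^{a,b}_k(x)\,t-b^k=0$. Solving this quadratic gives
\[
\alpha^k,\ \beta^k=\frac{\psi^{a,b}_k(x)}{2}\pm\sqrt{\frac{\left(\psi^{a,b}_k(x)\right)^2}{4}+b^k},
\]
which are exactly the two radicands appearing inside the $k$-th roots in \eqref{oddre} and \eqref{oddre1}. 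To pin down the signs I would invoke Lemma~\ref{lem2}: for odd $k$ it yields $\left(\psi^{a,b}_k(x)\right)^2+4b^k=(a^2x^2+4b)\left(\phi^{a,b}_{k-1}(x)\right)^2=(\alpha^k-\beta^k)^2$, using $\alpha^k-\beta^k=(\alpha-\beta)\,\phi^{a,b}_{k-1}(x)$ from \eqref{phiasfrac} together with $(\alpha-\beta)^2=a^2x^2+4b$. Since $\alpha\ge\beta$ and $k$ is odd give $\alpha^k\ge\beta^k$, the nonnegative square root is $\sqrt{\left(\psi^{a,b}_k(x)\right)^2+4b^k}=\alpha^k-\beta^k$, confirming that the $+$ sign corresponds to $\alpha^k$ and the $-$ sign to $\beta^k$.

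Finally I would take real $k$-th roots. Because $k$ is odd, $t\mapsto t^k$ is a strictly increasing bijection of $\mathbb{R}$, so $\sqrt[k]{\alpha^k}=\alpha$ and $\sqrt[k]{\beta^k}=\beta$ unambiguously, even when $\beta^k<0$. Adding the two resulting expressions then yields $\alpha+\beta=a\,x$, which is \eqref{oddre}; subtracting them yields $\alpha-\beta=\sqrt{a^2x^2+4b}$, which is \eqref{oddre1}.

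The only genuinely delicate point is this last branch issue: the identities hinge on $\sqrt[k]{\alpha^k}=\alpha$ holding for the real odd root, which would fail for even $k$ (where $\sqrt[k]{\beta^k}=|\beta|$ instead), and on the correct pairing of the $\pm$ radical with $\alpha^k$ versus $\beta^k$. Both are secured by the parity of $k$ together with the ordering $\alpha\ge\beta$ coming from the hypothesis $(ax)^2\ge -4b$; everything else is the routine quadratic-root computation above, so I expect no further obstacle.
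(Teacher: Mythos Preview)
Your argument is correct and follows essentially the same route as the paper: both identify the two radicands as $\alpha(x)^k$ and $\beta(x)^k$ via Binet's formula and then take real odd $k$-th roots to recover $\alpha\pm\beta$. The only cosmetic difference is that the paper reaches $\alpha^k=\tfrac{\psi}{2}+\sqrt{\tfrac{\psi^2}{4}+b^k}$ by first passing through $\sqrt{a^2x^2+4b}\,\phi^{a,b}_{k-1}(x)=\alpha^k-\beta^k$ and then invoking Lemma~\ref{lem2}, whereas you obtain it directly from the product $\alpha\beta=-b$ via the quadratic $t^2-\psi\,t-b^k=0$; your handling of the sign pairing and the odd-root branch is, if anything, more explicit than the paper's.
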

\begin{proof}
  The idea of the proof is built on making use of the two Binet's formulae \eqref{phiasfrac} and \eqref{psiasfrac}. From \eqref{alphabeta}, it is clear that
  \bq
 \label{idsum}
\alpha(x)+\beta(x)=a\, x,
\eq
\bq
 \label{idminum}
 \alpha(x)-\beta(x)=\sqrt{a^2\, x^2+4\, b}.
 \eq
  Inserting identity \eqref{idminum} into relation \eqref{phiasfrac}, the following relation is obtained
    \bq
    \sqrt{a^2\, x^2+4\, b}\  \phi^{a,b}_{k-1}(x)=\left(\alpha(x)\right)^k-\left(\beta(x)\right)^k.
     \eq

 Now, the last relation along with relation \eqref{psiasfrac} yield the following two relations:
   \bq
   \label{eq1}
   \left(\alpha(x)\right)^{k}=\frac12\left\{\psi^{a,b}_{k}(x)+\sqrt{a^2\, x^2+4\, b}\, \phi^{a,b}_{k-1}(x)\right\},
   \eq
   and
    \bq
     \label{eq2}
   \left(\beta(x)\right)^{k}=\frac12\left\{\psi^{a,b}_{k}(x)-\sqrt{a^2\, x^2+4\, b}\, \phi^{a,b}_{k-1}(x)\right\}.
   \eq
    Next, and based on identity \eqref{squarere}, relations \eqref{eq1} and \eqref{eq2} can be written alternatively as
\[\left(\alpha(x)\right)^{k}=\frac12\left\{\psi^{a,b}_{k}(x)+\sqrt{\left(\psi^{a,b}_{k}(x)\right)^2+4\, b^k}\right\},\]
   and
\[ \left(\beta(x)\right)^{k}=\frac12\left\{\psi^{a,b}_{k}(x)-\sqrt{\left(\psi^{a,b}_{k}(x)\right)^2+4\, b^k}\right\}.\]
   Finally, we observe that the last two relations lead to the following interesting identity
\[\sqrt[k]{\tfrac{\psi^{a,b}_{k}(x)}{2}+\sqrt{\tfrac{\left(\psi^{a,b}_{k}(x)\right)^2}{4}+b^k}}
    \pm\sqrt[k]{\tfrac{\psi^{a,b}_{k}(x)}{2}-\sqrt{\tfrac{\left(\psi^{a,b}_{k}(x)\right)^2}{4}+b^k}}=\alpha(x)\pm \beta(x),\]
     which can be split using relations \eqref{idsum} and \eqref{idminum} to give the two identities \eqref{oddre} and \eqref{oddre1}.
    This proves Theorem \ref{thmoddradicaL}.
    \end{proof}
     \begin{cor}
   \label{thmodd}
Let $k$ be any positive odd integer. Then for nonzero real numbers
$a$ and $b$ with $b\geq {-1}/{4}$, there are infinite number of
expressions of unit radicals in the sense that
    \bq
    \label{oddreunit}
    \sqrt[k]{\tfrac{\psi^{a,b}_{k}(\frac{1}{a})}{2}+\sqrt{\tfrac{\left(\psi^{a,b}_{k}(\frac{1}{a})\right)^2}{4}+b^k}}
    +\sqrt[k]{\tfrac{\psi^{a,b}_{k}(\frac{1}{a})}{2}-\sqrt{\tfrac{\left(\psi^{a,b}_{k}(\frac{1}{a})\right)^2}{4}+b^k}}=1.
    \eq
\end{cor}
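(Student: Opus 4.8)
The plan is to derive \eqref{oddreunit} as an immediate specialization of Theorem \ref{thmoddradicaL}, observing that the claimed identity is nothing but equation \eqref{oddre} evaluated at the particular point $x=\frac{1}{a}$. First I would check that the hypotheses of Theorem \ref{thmoddradicaL} are satisfied at this point. That theorem requires $k$ to be a positive odd integer, which is assumed here, together with the admissibility condition $(ax)^2\geq -4b$. Substituting $x=\frac{1}{a}$ gives $(ax)^2=\left(a\cdot\frac{1}{a}\right)^2=1$, so the condition reduces to $1\geq -4b$, i.e.\ $b\geq -\frac14$, which is precisely the hypothesis imposed in the corollary. Hence Theorem \ref{thmoddradicaL} is applicable at $x=\frac{1}{a}$.

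Next I would simply read off the value of both sides of \eqref{oddre} at $x=\frac{1}{a}$. The right-hand side of \eqref{oddre} equals $a\,x$, so it becomes $a\cdot\frac{1}{a}=1$, while the left-hand side turns into exactly the nested-radical expression appearing in \eqref{oddreunit} with $x$ replaced by $\frac{1}{a}$. This establishes the identity \eqref{oddreunit} at once. It is worth noting that the inner radical $\sqrt{\tfrac{\left(\psi^{a,b}_{k}(\frac{1}{a})\right)^2}{4}+b^k}$ is real precisely under the same condition $b\geq -\frac14$: by Lemma \ref{lem2}, since $k$ is odd one has $\left(\psi^{a,b}_{k}(x)\right)^2+4\,b^k=(a^2x^2+4b)\left(\phi^{a,b}_{k-1}(x)\right)^2$, which is nonnegative exactly when $a^2x^2+4b\geq 0$, so the expression is well defined within the real framework of Theorem \ref{thmoddradicaL}.

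Finally, to justify the assertion that there are infinitely many such unit-radical expressions, I would remark that the foregoing argument holds for every admissible triple consisting of a nonzero real $a$, a real $b\geq -\frac14$, and a positive odd integer $k$. Since each distinct choice of $(a,b,k)$ produces a genuinely different left-hand side in \eqref{oddreunit} that nevertheless collapses to $1$, the family of resulting identities is infinite. There is no real obstacle in this proof; the only point requiring attention is the verification of the admissibility condition $b\geq -\frac14$, which is exactly what makes the specialization to $x=\frac{1}{a}$ legitimate.
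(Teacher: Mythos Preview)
Your proof is correct and follows exactly the paper's approach: the paper simply states that setting $x=\tfrac{1}{a}$ in \eqref{oddre} yields \eqref{oddreunit}. Your additional verification that the hypothesis $(ax)^2\ge -4b$ reduces to $b\ge -\tfrac14$, together with the well-definedness remark via Lemma~\ref{lem2}, merely makes explicit what the paper leaves implicit.
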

\begin{proof}
 Setting $x=\frac{1}{a}$ in \eqref{oddre} yields formula \eqref{oddreunit}.
\end{proof}
As a consequence of Theorem \ref{thmoddradicaL}, and taking into
consideration, the special polynomials of $\psi^{a,b}_{i}(x)$ in
Table 1, we obtain some new reduction formulae of some odd radicals
involving Lucas, Pell-Lucas, Fermat Lucas, and Chebyshev and Dickson
polynomials of the first kinds. The following corollaries display
these formulae.
    \begin{cor}
    \label{cor14}
    For every positive integer $i$, the following reduction formulae hold
    \begin{enumerate}

\item
 For all $x\in \mathbb{R}$
 \\ $\sqrt[2i+1]{\frac{
L_{2i+1}\left( x\right) }{2}+\sqrt{\frac{L_{2i+1}^{2}\left( x\right)
}{4}+1}}+\sqrt [2i+1]{\frac{L_{2i+1}\left( x\right)
}{2}-\sqrt{\frac{L_{2i+1}^{2}\left( x\right) }{4} +1}}=x,$\\
 \\ $\sqrt[2i+1]{\frac{
L_{2i+1}\left( x\right) }{2}+\sqrt{\frac{L_{2i+1}^{2}\left( x\right)
}{4}+1}}-\sqrt [2i+1]{\frac{L_{2i+1}\left( x\right)
}{2}-\sqrt{\frac{L_{2i+1}^{2}\left( x\right) }{4} +1}}=\sqrt{x^2+4}.$

\item For all $x\in \mathbb{R}$
 \\$\sqrt[2i+1]{\frac{ Q_{2i+1}\left( x\right)
}{2}+\sqrt{\frac{Q_{2i+1}^{2}\left( x\right) }{4}+1}}+\sqrt
[2i+1]{\frac{Q_{2i+1}\left( x\right)
}{2}-\sqrt{\frac{Q_{2i+1}^{2}\left( x\right) }{4} +1}}=2\, x ,$\\
\\$\sqrt[2i+1]{\frac{ Q_{2i+1}\left( x\right)
}{2}+\sqrt{\frac{Q_{2i+1}^{2}\left( x\right) }{4}+1}}-\sqrt
[2i+1]{\frac{Q_{2i+1}\left( x\right)
}{2}-\sqrt{\frac{Q_{2i+1}^{2}\left( x\right) }{4} +1}}=2\, \sqrt{x^2+1}.$

\item For all $x\in {\mathbb{R}\setminus
(-\frac{2\sqrt2}{3},\frac{2\sqrt2}{3})}$ \\ $\sqrt[2i+1]{\frac{
f_{2i+1}\left( x\right) }{2}+\sqrt{\frac{f_{2i+1}^{2}\left( x\right)
}{4}-2^{2i+1}}}+\sqrt [2i+1]{\frac{f_{2i+1}\left( x\right)
}{2}-\sqrt{\frac{f_{2i+1}^{2}\left( x\right) }{4} -2^{2i+1}}}=3\,
x,$\\
$\sqrt[2i+1]{\frac{
f_{2i+1}\left( x\right) }{2}+\sqrt{\frac{f_{2i+1}^{2}\left( x\right)
}{4}-2^{2i+1}}}-\sqrt [2i+1]{\frac{f_{2i+1}\left( x\right)
}{2}-\sqrt{\frac{f_{2i+1}^{2}\left( x\right) }{4} -2^{2i+1}}}=\sqrt{9\, x^2-8}.$
\item For all $x\in
{\mathbb{R}\setminus (-1,1)}$\\ $\sqrt[2i+1]{{ T_{2i+1}\left(
x\right) }+\sqrt{{T_{2i+1}^{2}\left( x\right) }-1}}+\sqrt[2i+1]{{
T_{2i+1}\left( x\right) }-\sqrt{{T_{2i+1}^{2}\left( x\right)
}-1}}=2\, x, $\\
$\sqrt[2i+1]{{ T_{2i+1}\left(
x\right) }+\sqrt{{T_{2i+1}^{2}\left( x\right) }-1}}-\sqrt[2i+1]{{
T_{2i+1}\left( x\right) }-\sqrt{{T_{2i+1}^{2}\left( x\right)
}-1}}=2\, \sqrt{x^2-1}.$
\item Either for all $x\in \mathbb{R}$ whenever $\alpha<0$ or for all $x\in {\mathbb{R}\setminus
(-2\, \sqrt{\alpha},2\, \sqrt {\alpha})}$ whenever $\alpha>0$ \\
$\sqrt[2i+1]{\frac{ D_{2i+1}^{\alpha}\left( x\right)
}{2}+\sqrt{\frac{({D_{2i+1}^{\alpha}\left( x\right)
})^{2}}{4}-\alpha^{2i+1}}}+\sqrt
[2i+1]{\frac{D_{2i+1}^{\alpha}\left( x\right)
}{2}-\sqrt{\frac{({D_{2i+1}^{\alpha}\left( x\right) })^{2}}{4}
-\alpha^{2i+1}}}= x,$\\
$\sqrt[2i+1]{\frac{ D_{2i+1}^{\alpha}\left( x\right)
}{2}+\sqrt{\frac{({D_{2i+1}^{\alpha}\left( x\right)
})^{2}}{4}-\alpha^{2i+1}}}-\sqrt
[2i+1]{\frac{D_{2i+1}^{\alpha}\left( x\right)
}{2}-\sqrt{\frac{({D_{2i+1}^{\alpha}\left( x\right) })^{2}}{4}
-\alpha^{2i+1}}}=\sqrt{x^2-4\, \alpha}.$
\end{enumerate}
\end{cor}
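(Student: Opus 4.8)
My plan is to derive the entire corollary as a direct specialization of Theorem~\ref{thmoddradicaL}. Since every item asserts a statement for positive integers $i$ and involves a radical of order $2i+1$, the first move is to set $k=2i+1$ in \eqref{oddre} and \eqref{oddre1}; this choice automatically makes $k$ a positive odd integer, so the hypotheses of Theorem~\ref{thmoddradicaL} are met. It then remains only to read off the pair $(a,b)$ for each named polynomial from Table~1 and substitute: namely $(a,b)=(1,1)$ for $L_j=\psi^{1,1}_j$, $(a,b)=(2,1)$ for $Q_j=\psi^{2,1}_j$, $(a,b)=(3,-2)$ for $f_j=\psi^{3,-2}_j$, $(a,b)=(2,-1)$ for the Chebyshev case since $T_j=\tfrac12\psi^{2,-1}_j$, and $(a,b)=(1,-\alpha)$ for $D^{\alpha}_j=\psi^{1,-\alpha}_j$.

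For each case I would track three ingredients produced by the substitution. First, the polynomial $\psi^{a,b}_{2i+1}(x)$ is replaced by the corresponding named polynomial from Table~1; for the Chebyshev item the identity $\psi^{2,-1}_{2i+1}(x)=2\,T_{2i+1}(x)$ must be used, so that $\tfrac{\psi^{a,b}_{2i+1}(x)}{2}=T_{2i+1}(x)$ and $\tfrac{(\psi^{a,b}_{2i+1}(x))^2}{4}=T_{2i+1}^2(x)$, which cleanly explains the absence of the halving factor inside the radicals in item~4. Second, the constant $b^k=b^{2i+1}$ inside the inner square root must be computed with its sign: it equals $1$ for Lucas and Pell-Lucas, $(-2)^{2i+1}=-2^{2i+1}$ for Fermat-Lucas, $(-1)^{2i+1}=-1$ for Chebyshev, and $(-\alpha)^{2i+1}=-\alpha^{2i+1}$ for Dickson. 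Third, the two right-hand sides $a\,x$ and $\sqrt{a^2x^2+4b}$ specialize to the stated values, e.g. $\sqrt{9x^2-8}$ for Fermat-Lucas and $2\sqrt{x^2-1}$ for Chebyshev.

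The only genuinely case-dependent piece is the domain restriction, which comes entirely from translating the single hypothesis $(a\,x)^2\geq -4b$ of Theorem~\ref{thmoddradicaL} into an explicit condition on $x$. For Lucas and Pell-Lucas this reads $x^2\geq -4$ and $4x^2\geq -4$, hence holds for all real $x$; for Fermat-Lucas it becomes $9x^2\geq 8$, giving $x\in\mathbb{R}\setminus(-\tfrac{2\sqrt2}{3},\tfrac{2\sqrt2}{3})$; for Chebyshev it becomes $4x^2\geq 4$, giving $x\in\mathbb{R}\setminus(-1,1)$. The mildly more delicate step, and the one I would treat as the main obstacle, is the Dickson case, where the condition $x^2\geq 4\alpha$ must be split according to the sign of $\alpha$: it is vacuous (all $x\in\mathbb{R}$) when $\alpha<0$, but forces $x\in\mathbb{R}\setminus(-2\sqrt{\alpha},2\sqrt{\alpha})$ when $\alpha>0$, exactly as stated in item~5. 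Assembling the substituted identities with these domain descriptions completes the proof, each item being the $+$ and $-$ pair \eqref{oddre}, \eqref{oddre1} evaluated at the appropriate parameters.
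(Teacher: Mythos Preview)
Your proposal is correct and follows exactly the approach the paper takes: the corollary is stated there without a written proof, merely as a direct consequence of Theorem~\ref{thmoddradicaL} obtained by specializing $(a,b)$ according to Table~1. Your write-up is in fact more detailed than the paper's, since you explicitly track the Chebyshev normalization $T_j=\tfrac12\psi^{2,-1}_j$, the sign of $b^{2i+1}$ in each case, and the translation of the hypothesis $(ax)^2\ge -4b$ into the stated domain restrictions.
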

If we put $x=1$ in the first three items of the above corollary, we get the following
 new interesting reduction formulae of some odd radicals involving Lucas, Pell-Lucas, and Fermat-Lucas numbers.
 \begin{cor}
For every non-negative integer $i$, one has
\begin{align*}
&\sqrt[2i+1]{\frac{ L_{2i+1} }{2}+\sqrt{\frac{L_{2i+1}^{2}
}{4}+1}}+\sqrt [2i+1]{\frac{L_{2i+1} }{2}-\sqrt{\frac{L_{2i+1}^{2}
}{4} +1}}=1,\\
&\sqrt[2i+1]{\frac{ L_{2i+1} }{2}+\sqrt{\frac{L_{2i+1}^{2}
}{4}+1}}-\sqrt [2i+1]{\frac{L_{2i+1} }{2}-\sqrt{\frac{L_{2i+1}^{2}
}{4} +1}}=\sqrt{5},\\
 &\sqrt[2i+1]{\frac{ Q_{2i+1} }{2}+\sqrt{\frac{Q_{2i+1}^{2}
}{4}+1}}+\sqrt [2i+1]{\frac{Q_{2i+1} }{2}-\sqrt{\frac{Q_{2i+1}^{2}
}{4} +1}}=2,\\
&\sqrt[2i+1]{\frac{ Q_{2i+1} }{2}+\sqrt{\frac{Q_{2i+1}^{2}
}{4}+1}}-\sqrt [2i+1]{\frac{Q_{2i+1} }{2}-\sqrt{\frac{Q_{2i+1}^{2}
}{4} +1}}=2\, \sqrt{2},\\
 &\sqrt[2i+1]{\frac{ f_{2i+1} }{2}+\sqrt{\frac{f_{2i+1}^{2}
}{4}-2^{2i+1}}}+\sqrt [2i+1]{\frac{f_{2i+1}
}{2}-\sqrt{\frac{f_{2i+1}^{2} }{4} -2^{2i+1}}}=3,\\
&\sqrt[2i+1]{\frac{ f_{2i+1} }{2}+\sqrt{\frac{f_{2i+1}^{2}
}{4}-2^{2i+1}}}-\sqrt [2i+1]{\frac{f_{2i+1}
}{2}-\sqrt{\frac{f_{2i+1}^{2} }{4} -2^{2i+1}}}=1.
\end{align*}
 \end{cor}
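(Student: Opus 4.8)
The plan is to obtain every identity of this corollary as the specialization $x=1$ of the corresponding item of Corollary~\ref{cor14}. First I would record that, by the entries of Table~1, the Lucas, Pell--Lucas and Fermat--Lucas \emph{numbers} are exactly the corresponding \emph{polynomials} evaluated at $x=1$: $L_{2i+1}=L_{2i+1}(1)$, $Q_{2i+1}=Q_{2i+1}(1)$ and $f_{2i+1}=f_{2i+1}(1)$. Hence setting $x=1$ in each of the two radical identities of items (1), (2) and (3) of Corollary~\ref{cor14} converts every polynomial appearing there into the matching number, and the two nested radicals on the left become precisely the expressions in the present statement.

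Next I would evaluate the right-hand sides one family at a time. For the Lucas case, putting $x=1$ in item~(1) sends the right-hand sides $x$ and $\sqrt{x^2+4}$ to $1$ and $\sqrt5$. For the Pell--Lucas case, putting $x=1$ in item~(2) sends $2x$ and $2\sqrt{x^2+1}$ to $2$ and $2\sqrt2$. For the Fermat--Lucas case, putting $x=1$ in item~(3) sends $3x$ and $\sqrt{9x^2-8}$ to $3$ and $\sqrt{9-8}=1$. These are exactly the six asserted values.

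The only step needing genuine care is verifying that $x=1$ lies in the admissible domain of each item, and this is where I expect the (modest) difficulty to sit. Items~(1) and~(2) hold for all real $x$, so there is nothing to check. For item~(3) the stated domain is $\mathbb{R}\setminus\left(-\tfrac{2\sqrt2}{3},\tfrac{2\sqrt2}{3}\right)$; since $\tfrac{2\sqrt2}{3}\approx 0.9428<1$, the point $x=1$ indeed belongs to this set. Equivalently, the condition $(ax)^2\ge -4b$ of Theorem~\ref{thmoddradicaL} reads $9x^2\ge 8$ for the Fermat parameters $a=3,\,b=-2$, which holds at $x=1$; by Lemma~\ref{lem2} this is exactly what guarantees that the quantity $\tfrac{f_{2i+1}^2}{4}-2^{2i+1}$ under the inner radical is nonnegative, so the identity is valid. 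Apart from this domain verification the corollary is a direct substitution.
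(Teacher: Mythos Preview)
Your proposal is correct and follows exactly the approach of the paper, which introduces this corollary with ``If we put $x=1$ in the first three items of the above corollary\ldots''; you are simply spelling out that substitution and, as a welcome addition, verifying that $x=1$ lies in the admissible domain for the Fermat--Lucas case.
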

  In the following example, we give reduction formulae of a few radicals as an application of Corollary \ref{cor14}.
  \begin{exa}
 The following identities are valid:
\begin{enumerate}
\item $\sqrt[3]{7+5\sqrt{2}}+\sqrt[3]{7-5\sqrt{2}}=
\\\\
\sqrt[3]{\frac{L_{3}(2)}{2}+\sqrt{\frac{(L_{3}(2))^{2}
}{4}+1}}+\sqrt [3]{\frac{L_{3}(2)}{2}-\sqrt{\frac{(L_{3}(2))^{2}
}{4}+1}}=2.$
\item $\sqrt[3]{7+5\sqrt{2}}-\sqrt[3]{7-5\sqrt{2}}=
\\\\
\sqrt[3]{\frac{L_{3}(2)}{2}+\sqrt{\frac{(L_{3}(2))^{2}
}{4}+1}}-\sqrt [3]{\frac{L_{3}(2)}{2}-\sqrt{\frac{(L_{3}(2))^{2}
}{4}+1}}=2\, \sqrt{2}.$
\item $\sqrt[3]{117+37\sqrt{10}}+\sqrt[3]{117-37\sqrt{10}}=
\\\\
\sqrt[3]{\frac{Q_{3}(3)}{2}+\sqrt{\frac{(Q_{3}(3))^{2}
}{4}+1}}+\sqrt [3]{\frac{Q_{3}(3)}{2}-\sqrt{\frac{(Q_{3}(3))^{2}
}{4}+1}}=6.$
\item
$\sqrt[5]{115896+19876\sqrt{34}}+\sqrt[5]{115896-19876\sqrt{34}}=
\\\\
\sqrt[5]{\frac{f_{5}(4)}{2}+\sqrt{\frac{(f_{5}(4))^{2}
}{4}-2^5}}+\sqrt [5]{\frac{f_{5}(4)}{2}-\sqrt{\frac{(f_{5}(4))^{2}
}{4}-2^5}}=12.$
\item
$\sqrt[5]{47525+19402 \sqrt{6}}+\sqrt[5]{47525-19402 \sqrt{6}}=
\\\\
\sqrt[5]{T_{5}(5)+\sqrt{(T_{5}(5))^{2}-1}}+\sqrt
[5]{T_{5}(5)-\sqrt{(T_{5}(5))^{2}-1}}=10.$

\end{enumerate}
\end{exa}
\subsection{New reduction formulae of some even radicals}
This section is devoted to presenting new formulae of some even radicals.
  \begin{thm}
   \label{thmevenradical1}
Let $k$ be any positive even integer. Then for every real number
with $(ax)^2\geq-4b$, the following two identities hold:
    \bq
    \label{Evenre}
    \begin{split}
    \sqrt[k]{\tfrac{\psi^{a,b}_{k}(x)}{2}+\sqrt{\tfrac{\left(\psi^{a,b}_{k}(x)\right)^2}{4}-b^k}}
    -\sqrt[k]{\tfrac{\psi^{a,b}_{k}(x)}{2}-\sqrt{\tfrac{\left(\psi^{a,b}_{k}(x)\right)^2}{4}-b^k}}=\begin{cases}
     \left|a\, x\right|,&b>0,\\[0.2cm]
     \sqrt{a^2\, x^2+4\, b},&b<0,
    \end{cases}
    \end{split}
    \eq
    and
        \bq
    \label{Evenre1}
    \begin{split}
    \sqrt[k]{\tfrac{\psi^{a,b}_{k}(x)}{2}+\sqrt{\tfrac{\left(\psi^{a,b}_{k}(x)\right)^2}{4}-b^k}}
    +\sqrt[k]{\tfrac{\psi^{a,b}_{k}(x)}{2}-\sqrt{\tfrac{\left(\psi^{a,b}_{k}(x)\right)^2}{4}-b^k}}=\begin{cases}
     \sqrt{a^2\, x^2+4\, b},&b>0,\\[0.2cm]
    \left|a\, x\right| ,&b<0.
    \end{cases}
    \end{split}
    \eq
\end{thm}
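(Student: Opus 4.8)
The plan is to mirror the strategy used for Theorem \ref{thmoddradicaL}, again expressing everything through $\alpha(x)$ and $\beta(x)$ via the Binet forms \eqref{phiasfrac} and \eqref{psiasfrac}, but this time keeping careful track of the absolute values that appear because $k$ is even. First I would record the three basic relations $\alpha(x)+\beta(x)=a\,x$, $\alpha(x)-\beta(x)=\sqrt{a^2x^2+4b}$, and $\alpha(x)\,\beta(x)=-b$, the last of which follows from \eqref{alphabeta} by direct multiplication. Since $(ax)^2\ge-4b$, both $\alpha(x)$ and $\beta(x)$ are real.

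Next I would invoke Lemma \ref{lem2}. For $k$ even one has $(-1)^{k+1}=-1$, so \eqref{squarere} gives $\left(\psi^{a,b}_{k}(x)\right)^2-4\,b^k=(a^2x^2+4b)\left(\phi^{a,b}_{k-1}(x)\right)^2\ge 0$; hence the inner radical $\sqrt{(\psi^{a,b}_{k}(x))^2/4-b^k}$ is real and equals $\tfrac12\left|\alpha(x)^k-\beta(x)^k\right|$, using $(\alpha^k-\beta^k)^2=(\alpha^k+\beta^k)^2-4(\alpha\beta)^k$ together with $(-b)^k=b^k$ for $k$ even. Combining this with $\psi^{a,b}_{k}(x)=\alpha(x)^k+\beta(x)^k$, the two quantities under the outer $k$-th roots become $\max\!\left(\alpha(x)^k,\beta(x)^k\right)$ and $\min\!\left(\alpha(x)^k,\beta(x)^k\right)$, both of which are nonnegative because $k$ is even, so the outer radicals are well defined as real numbers.

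The crucial point, and the main departure from the odd case, is that because $k$ is even one has $\sqrt[k]{\alpha(x)^k}=\left|\alpha(x)\right|$ rather than $\alpha(x)$, and likewise for $\beta$. Thus the two outer radicals equal $\max\!\left(\left|\alpha(x)\right|,\left|\beta(x)\right|\right)$ and $\min\!\left(\left|\alpha(x)\right|,\left|\beta(x)\right|\right)$, so the left-hand sides of \eqref{Evenre} and \eqref{Evenre1} reduce to $\bigl|\,|\alpha(x)|-|\beta(x)|\,\bigr|$ and $|\alpha(x)|+|\beta(x)|$ respectively. The remaining work is a sign analysis driven by $\alpha(x)\,\beta(x)=-b$: when $b>0$ the two roots have opposite signs, so $|\alpha|+|\beta|=|\alpha-\beta|=\sqrt{a^2x^2+4b}$ while $\bigl|\,|\alpha|-|\beta|\,\bigr|=|\alpha+\beta|=|ax|$; when $b<0$ they share a common sign, so $|\alpha|+|\beta|=|\alpha+\beta|=|ax|$ while $\bigl|\,|\alpha|-|\beta|\,\bigr|=|\alpha-\beta|=\sqrt{a^2x^2+4b}$. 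This reproduces exactly the two-branch right-hand sides in \eqref{Evenre} and \eqref{Evenre1}.

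The main obstacle I anticipate is the bookkeeping of the absolute values and the identification of which of $\alpha^k,\beta^k$ is the larger; it is precisely the case distinction on the sign of $b$ that generates the two-branch outcome, and so it must be carried out explicitly rather than absorbed into a single closed formula as in the odd case. Everything else is a faithful even-exponent analogue of the computation already performed for Theorem \ref{thmoddradicaL}.
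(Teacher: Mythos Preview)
Your proposal is correct and follows essentially the same route as the paper: both invoke Lemma \ref{lem2} and the Binet forms to reduce the two left-hand sides to $\bigl|\,|\alpha(x)|\mp|\beta(x)|\,\bigr|$, and then finish by a sign discussion. The paper's own proof is considerably terser---it simply records the identity $\sqrt{a^{2}x^{2}+4b}\,\bigl|\phi^{a,b}_{k-1}(x)\bigr|=\sqrt{(\psi^{a,b}_{k}(x))^{2}-4b^{k}}$, appeals to ``similar procedures'' as in Theorem \ref{thmoddradicaL}, and states the $\bigl|\,|\alpha|\mp|\beta|\,\bigr|$ conclusion without spelling out the case split---whereas you make the mechanism explicit by using $\alpha(x)\beta(x)=-b$ to decide whether $\alpha$ and $\beta$ share a sign, which is exactly the missing step needed to obtain the two-branch right-hand sides.
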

\begin{proof}
   Let $k$ be a positive even integer. In view of equation \eqref{BinetFib}, for all real number $x$ with
   $(ax)^2\geq-4b$:\quad
   $\phi^{a,b}_{k-1}(x)\geq 0$ if and only if $a\, x\geq 0$.
   \\Now equation \eqref{squarere} may be written as
   ${\sqrt{a^{2}x^{2}+4b}}\mid\phi_{k-1}\mid=\sqrt{\psi_{k}^{2}-4\, b^k}$.
   Using similar procedures followed in the proof of Theorem
\ref{thmoddradicaL} leads to the following identity
\[
    \sqrt[k]{\tfrac{\psi^{a,b}_{k}(x)}{2}+\sqrt{\dd\tfrac{\left(\psi^{a,b}_{k}(x)\right)^2}{4}-b^k}}
    \mp \sqrt[k]{\tfrac{\psi^{a,b}_{k}(x)}{2}-
    \sqrt{\dd\tfrac{\left(\psi^{a,b}_{k}(x)\right)^2}{4}-b^k}}=\big|\left|\alpha(x)\right|
    \mp\left|\beta(x)\right|\big|,
\]
 and consequently, identities \eqref{Evenre} and \eqref{Evenre1} can be obtained.
    \end{proof}

As consequences of Theorem \ref{thmevenradical1}, we give some
reduction formulae of certain even radicals. The following corollary
displays these formulae.
 \begin{cor}
  \label{evenrad1}
    For every positive integer $i$ and for every real number $x$, the following formulae hold
\begin{enumerate}
\item For all $x\in \mathbb{R}$\\
$\sqrt[2i]{\frac{L_{2i}\left(
x\right)}{2}+\sqrt{\frac{L_{2i}^{2}\left( x\right) }{4}-1}}-\sqrt
[2i]{\frac{L_{2i}\left( x\right) }{2}-\sqrt{\frac{L_{2i}^{2}\left(
x\right) }{4} -1}}=\left|x\right|,$\\
$\sqrt[2i]{\frac{L_{2i}\left(
x\right)}{2}+\sqrt{\frac{L_{2i}^{2}\left( x\right) }{4}-1}}+\sqrt
[2i]{\frac{L_{2i}\left( x\right) }{2}-\sqrt{\frac{L_{2i}^{2}\left(
x\right) }{4} -1}}=\sqrt{x^2+4}.$

 \item For all $x\in \mathbb{R}$\\ $\sqrt[2i]{\frac{
Q_{2i}\left( x\right) }{2}+\sqrt{\frac{Q_{2i}^{2}\left( x\right)
}{4}-1}}-\sqrt [2i]{\frac{Q_{2i}\left( x\right)
}{2}-\sqrt{\frac{Q_{2i}^{2}\left( x\right) }{4}
-1}}=2\, |x|,$\\
$\sqrt[2i]{\frac{
Q_{2i}\left( x\right) }{2}+\sqrt{\frac{Q_{2i}^{2}\left( x\right)
}{4}-1}}+\sqrt [2i]{\frac{Q_{2i}\left( x\right)
}{2}-\sqrt{\frac{Q_{2i}^{2}\left( x\right) }{4}
-1}}=2\, \sqrt{x^2+1}.$
\item For all $x\in {\mathbb{R}\setminus
(-\frac{2\sqrt2}{3},\frac{2\sqrt2}{3})}$ \\ $\sqrt[2i]{\frac{
f_{2i}\left( x\right) }{2}+\sqrt{\frac{f_{2i}^{2}\left( x\right)
}{4}-2^{2i}}}-\sqrt [2i]{\frac{f_{2i}\left( x\right)
}{2}-\sqrt{\frac{f_{2i}^{2}\left(
x\right) }{4} -2^{2i}}}=\sqrt{9\, x^2-8},$\\
$\sqrt[2i]{\frac{
f_{2i}\left( x\right) }{2}+\sqrt{\frac{f_{2i}^{2}\left( x\right)
}{4}-2^{2i}}}+\sqrt [2i]{\frac{f_{2i}\left( x\right)
}{2}-\sqrt{\frac{f_{2i}^{2}\left(
x\right) }{4} -2^{2i}}}=3\, |x|.$\\

\item For all $x\in {\mathbb{R}\setminus
(-1,1)}$ \\ $\sqrt[2i]{{ T_{2i}\left( x\right)
}+\sqrt{{T_{2i}^{2}\left( x\right) }-1}}-\sqrt[2i]{{ T_{2i}\left(
x\right) }+\sqrt{{T_{2i}^{2}\left( x\right) }-1}}=2\, \sqrt{x^2-1},$\\
$\sqrt[2i]{{ T_{2i}\left( x\right)
}+\sqrt{{T_{2i}^{2}\left( x\right) }-1}}+\sqrt[2i]{{ T_{2i}\left(
x\right) }+\sqrt{{T_{2i}^{2}\left( x\right) }-1}}=2\, |x|$.
\end{enumerate}
\end{cor}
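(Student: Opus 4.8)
The plan is to obtain each of the five pairs of identities as a direct specialization of Theorem \ref{thmevenradical1} with $k=2i$, which is automatically a positive even integer, and with the parameters $(a,b)$ read off from Table 1. First I would record the four relevant assignments $\psi^{1,1}_{2i}=L_{2i}$, $\psi^{2,1}_{2i}=Q_{2i}$, $\psi^{3,-2}_{2i}=f_{2i}$, and $\psi^{2,-1}_{2i}=2\,T_{2i}$, noting that only the last carries the normalizing factor $\tfrac12$ appearing in the table. For the three cases governed directly by a $\psi$ without rescaling, substitution is immediate: one computes $b^{k}=b^{2i}$, the quantity $|a\,x|$, and $\sqrt{a^{2}x^{2}+4b}$ for each pair $(a,b)$, and then reads off the two identities from \eqref{Evenre} and \eqref{Evenre1} according to the sign of $b$.

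Concretely, for Lucas ($a=b=1$) and Pell-Lucas ($a=2$, $b=1$) we are in the branch $b>0$, so $b^{2i}=1$, the difference equals $|a\,x|$ (giving $|x|$ and $2|x|$ respectively) and the sum equals $\sqrt{a^{2}x^{2}+4}$ (giving $\sqrt{x^{2}+4}$ and $2\sqrt{x^{2}+1}$). For Fermat-Lucas ($a=3$, $b=-2$) we are in the branch $b<0$, so $b^{2i}=2^{2i}$, and the two roles swap: the difference equals $\sqrt{9x^{2}-8}$ while the sum equals $3|x|$. The Chebyshev case is the one requiring a little care. Starting from \eqref{Evenre}--\eqref{Evenre1} with $a=2$, $b=-1$, I would substitute $\psi^{2,-1}_{2i}=2T_{2i}$, so that $\tfrac{\psi^{2,-1}_{2i}}{2}=T_{2i}$, $\tfrac{(\psi^{2,-1}_{2i})^{2}}{4}=T_{2i}^{2}$, and $b^{2i}=1$; since $b<0$ the difference then reduces to $\sqrt{4x^{2}-4}=2\sqrt{x^{2}-1}$ and the sum to $2|x|$.

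Finally I would pin down the domains from the single standing hypothesis $(a\,x)^{2}\ge -4b$ of Theorem \ref{thmevenradical1}. When $b>0$ this condition is vacuous, so the Lucas and Pell-Lucas formulae hold for all $x\in\mathbb{R}$. When $b<0$ it becomes $|a\,x|\ge 2\sqrt{-b}$: for Fermat-Lucas this reads $|x|\ge \tfrac{2\sqrt2}{3}$, yielding $x\in\mathbb{R}\setminus(-\tfrac{2\sqrt2}{3},\tfrac{2\sqrt2}{3})$, and for Chebyshev it reads $|x|\ge 1$, yielding $x\in\mathbb{R}\setminus(-1,1)$. The only genuine subtlety is the Chebyshev normalization; once the factor $2$ is correctly propagated through both the outer $k$-th root and the inner square root, each identity follows verbatim, so no step beyond this bookkeeping is expected to present an obstacle.
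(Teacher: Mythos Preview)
Your proposal is correct and follows exactly the approach the paper intends: the corollary is stated in the paper simply ``as consequences of Theorem~\ref{thmevenradical1}'' with no further proof, and you carry out precisely that specialization, reading off $(a,b)$ from Table~1, selecting the $b>0$ or $b<0$ branch of \eqref{Evenre}--\eqref{Evenre1}, handling the $\tfrac12$ normalization in the Chebyshev case, and recovering the domains from the hypothesis $(ax)^2\ge -4b$. The only slip is cosmetic: you refer to ``five pairs'' of identities, but the corollary has four.
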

 In the following example, we give formulae of some specific even radicals based on Corollary \ref{evenrad1}.
\begin{exa}
The following identities are valid:
\begin{enumerate}
\item $\sqrt[4]{17+12\sqrt{2}}-\sqrt[4]{17-12\sqrt{2}}=
\\\\
\sqrt[4]{\frac{L_{4}(2)}{2}+\sqrt{\frac{(L_{4}(2))^{2}
}{4}-1}}-\sqrt [4]{\frac{L_{4}(2)}{2}-\sqrt{\frac{(L_{4}(2))^{2}
}{4}-1}}=2.$
\item
$\sqrt[4]{5201+1020\sqrt{26}}-\sqrt[4]{5201-1020\sqrt{26}}=
\\\\
\sqrt[4]{\frac{Q_{4}(5)}{2}+\sqrt{\frac{(Q_{4}(5))^{2}
}{4}-1}}-\sqrt [4]{\frac{Q_{4}(5)}{2}-\sqrt{\frac{(Q_{4}(5))^{2}
}{4}-1}}=10.$
\item
$\sqrt[6]{\frac{83448209}{2}+\frac{4010265}{2}\sqrt{433}}+\sqrt[6]{\frac{83448209}{2}-\frac{4010265}{2}\sqrt{433}}=
\\\\
\sqrt[6]{\frac{f_{6}(7)}{2}+\sqrt{\frac{(f_{6}(7))^{2}
}{4}-2^6}}+\sqrt [6]{\frac{f_{6}(7)}{2}-\sqrt{\frac{(f_{6}(7))^{2}
}{4}-2^6}}=21.$
\item
$\sqrt[8]{708158977+408855776 \sqrt{3}}+\sqrt[8]{708158977-408855776 \sqrt{3}}=\\\\
\sqrt[8]{T_{8}(7)+\sqrt{(T_{8}(7))^{2}-1}}+\sqrt
[8]{T_{8}(7)-\sqrt{(T_{8}(7))^{2}-1}}=14.$

\end{enumerate}
\end{exa}
 \section {Some other Radicals formulae}
  This section is devoted to establishing other two reduction formulae of some radicals. First, the following lemma is needed in the sequel.
   \begin{lem}
   \label{lem3}
    For every positive integer $j$ and every $x\in
\mathbb{R}^*$, one has
\bq
\psi^{a,b}_{j}\left(\frac{1}{a}(x-\tfrac{b}{x})\right)
=\begin{cases}
x^j-\dd\frac{b^j}{x^j},& j\ \text{odd},\\[0.3cm]
x^j+\dd\frac{b^j}{x^j},& j\ \text{even}.
\end{cases}
\eq
\end{lem}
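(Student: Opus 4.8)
The plan is to evaluate $\psi^{a,b}_{j}$ through its Binet-type representation rather than through its power-form expansion. Recall from \eqref{psiasfrac} and \eqref{alphabeta} that $\psi^{a,b}_{j}(x)=\left(\alpha(x)\right)^{j}+\left(\beta(x)\right)^{j}$, where $\alpha(x)$ and $\beta(x)$ are the two roots $\tfrac{1}{2}\left(a\,x\pm\sqrt{a^{2}x^{2}+4b}\right)$. Accordingly, I would introduce the abbreviation $y=\tfrac{1}{a}\left(x-\tfrac{b}{x}\right)$ and aim to compute $\alpha(y)$ and $\beta(y)$ in closed form, so that $\psi^{a,b}_{j}(y)=\left(\alpha(y)\right)^{j}+\left(\beta(y)\right)^{j}$ collapses to a simple expression.

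The decisive step is the evaluation of the discriminant at the argument $y$. A direct computation gives $a^{2}y^{2}=(ay)^{2}=\left(x-\tfrac{b}{x}\right)^{2}=x^{2}-2b+\tfrac{b^{2}}{x^{2}}$, whence $a^{2}y^{2}+4b=x^{2}+2b+\tfrac{b^{2}}{x^{2}}=\left(x+\tfrac{b}{x}\right)^{2}$. This is the crux: the radicand becomes a perfect square, so that $\sqrt{a^{2}y^{2}+4b}=\left|x+\tfrac{b}{x}\right|$. Substituting $ay=x-\tfrac{b}{x}$ together with this square root into \eqref{alphabeta} then yields $\{\alpha(y),\beta(y)\}=\{\,x,\,-\tfrac{b}{x}\,\}$.

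Finally I would feed these two values back into the Binet representation to obtain $\psi^{a,b}_{j}(y)=x^{j}+\left(-\tfrac{b}{x}\right)^{j}=x^{j}+(-1)^{j}\,\tfrac{b^{j}}{x^{j}}$, and split on the parity of $j$ to recover the two stated cases. The only point requiring care is the ambiguity in the sign of the square root, equivalently the sign of $x+\tfrac{b}{x}$ and hence the question of which of $x$ and $-\tfrac{b}{x}$ is labelled $\alpha$ rather than $\beta$; this is the step I expect to be the main obstacle to a fully rigorous write-up. However, it dissolves at once upon observing that $\psi^{a,b}_{j}=\left(\alpha\right)^{j}+\left(\beta\right)^{j}$ is symmetric in $\alpha$ and $\beta$, so interchanging the two roots leaves the value unchanged and no case distinction on the sign of the radical is actually needed.
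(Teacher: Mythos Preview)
Your proof is correct and follows essentially the same route as the paper: both substitute $y=\tfrac{1}{a}(x-\tfrac{b}{x})$ into the Binet formula, observe that $a^{2}y^{2}+4b=(x+\tfrac{b}{x})^{2}$ so the radical becomes $\left|x+\tfrac{b}{x}\right|$, and then read off $\psi^{a,b}_{j}(y)=x^{j}+(-b/x)^{j}$ before splitting on parity. Your explicit appeal to the symmetry of $\alpha^{j}+\beta^{j}$ to dispose of the sign ambiguity is exactly the step the paper leaves implicit when passing from $\left|x+\tfrac{b}{x}\right|$ to the final expression.
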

\begin{proof}
Binet's formula \eqref{BinetLuc} implies that
\begin{align*}
&\hspace*{120pt}\psi^{a,b}_{j}\left(\frac{1}{a}(x-\tfrac{b}{x})\right)\\
&=\frac{1}{2^j}\left\{\left(\left(x-\tfrac{b}{x}\right)+\sqrt{\left(x-\tfrac{b}{x}\right)^2+4b}\right)^j+
\left((x-\tfrac{b}{x})-\sqrt{(x-\tfrac{b}{x})^2+4b}\right)^j\right\}\\
&=\frac{1}{2^j}\left\{\left((x-\tfrac{b}{x})+\sqrt{(x+\tfrac{b}{x})^2}\right)^j+\left((x-\tfrac{b}{x})
-\sqrt{(x+\tfrac{b}{x})^2}\right)^j\right\}\\
&=\frac{1}{2^j}\left\{\left(\left(x-\tfrac{b}{x}\right)+\left|{x+\tfrac{b}{x}}\right|\right)^j+\left((x-\tfrac{b}{x})
-\left|{x+\tfrac{b}{x}}\right|\right)^j\right\}\\
&=\frac{1}{2^j}\left\{(2\, x)^j+\left(\tfrac{-2\, b}{x}\right)^j\right\}=\begin{cases}
x^j-\frac{b^j}{x^j},& j\ \text{odd},\\
x^j+\frac{b^j}{x^j},& j\ \text{even}.
\end{cases}
\end{align*}
The proof is now complete.
\end{proof}
     \begin{thm}
   \label{thm7}
Let $j$ be any positive odd integer. If $b\in \mathbb{R}$, then for every $x\in
\mathbb{R}^*$, the following identity holds:
    \bq
    \begin{split}
    \label{oddthm71}
    &\sqrt[j]{\frac{\psi^{a,b}_{j}(\frac{1}{a}(x-\frac{b}{x}))}{2}+
    \sqrt{\dd\frac{\left(\psi^{a,b}_{j}(\frac{1}{a}(x-\frac{b}{x}))\right)^2}{4}+b^j}}=\\
     &\begin{cases}
     x,&x>0,\ b>0,\\
     \frac{-b}{x},&x<0,\ b>0,\\
     x,&x\in[-\sqrt{-b},0)\cup [\sqrt{-b},\infty),\, b<0,\\
     \frac{-b}{x},&x\in(-\infty,-\sqrt{-b})\cup (0,\sqrt{-b}),\, b<0.
    \end{cases}
    \end{split}
   \eq
\end{thm}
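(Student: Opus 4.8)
The plan is to reduce the statement to Lemma~\ref{lem3} and then perform a careful sign analysis, in the same spirit as the proof of Theorem~\ref{thmoddradicaL}. Since $j$ is odd, Lemma~\ref{lem3} gives $\psi^{a,b}_{j}\!\left(\frac{1}{a}(x-\frac{b}{x})\right)=x^j-\frac{b^j}{x^j}$. Abbreviating $\Psi:=x^j-\frac{b^j}{x^j}$, the first step is to simplify the quantity under the inner square root. A direct expansion gives $\frac{\Psi^2}{4}+b^j=\frac{1}{4}\left(x^{2j}-2b^j+\frac{b^{2j}}{x^{2j}}\right)+b^j=\frac{1}{4}\left(x^j+\frac{b^j}{x^j}\right)^2$, which is automatically nonnegative, so the inner radical is always defined and equals $\frac{1}{2}\left|x^j+\frac{b^j}{x^j}\right|$. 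Consequently the whole radicand reduces to $\frac{\Psi}{2}+\frac{1}{2}\left|x^j+\frac{b^j}{x^j}\right|=\frac{1}{2}\left(x^j-\frac{b^j}{x^j}\right)+\frac{1}{2}\left|x^j+\frac{b^j}{x^j}\right|$.

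The heart of the argument is then to evaluate this according to the sign of $S:=x^j+\frac{b^j}{x^j}=\frac{x^{2j}+b^j}{x^j}$. If $S\ge 0$ the two terms combine to $x^j$, and since $j$ is odd the $j$-th root is sign preserving, giving $\sqrt[j]{x^j}=x$. If $S<0$ the expression collapses to $-\frac{b^j}{x^j}=\left(\frac{-b}{x}\right)^j$ (again using that $j$ is odd), whose $j$-th root is $\frac{-b}{x}$. Thus the theorem reduces entirely to deciding, in each of the four stated regions, whether $S\ge 0$ or $S<0$; note in particular that at the boundary $S=0$ the radicand still equals $x^j$, so the closed endpoints $x=\pm\sqrt{-b}$ belong to the branch producing $x$.

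I would settle the sign of $S$ by examining numerator and denominator separately. The denominator $x^j$ has the same sign as $x$ because $j$ is odd. For the numerator, $x^{2j}+b^j=(x^2)^j+b^j$: when $b>0$ it is strictly positive for every $x\in\mathbb{R}^*$, whereas when $b<0$ one has $b^j=-(-b)^j$, so $(x^2)^j+b^j>0$ iff $x^2>-b$, i.e.\ iff $|x|>\sqrt{-b}$. Combining the two signs yields, for $b>0$, that $S>0$ exactly when $x>0$ and $S<0$ when $x<0$; and for $b<0$, that $S\ge 0$ precisely on $[-\sqrt{-b},0)\cup[\sqrt{-b},\infty)$ and $S<0$ on $(-\infty,-\sqrt{-b})\cup(0,\sqrt{-b})$. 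Matching each region with the value $x$ (when $S\ge 0$) or $\frac{-b}{x}$ (when $S<0$) reproduces exactly the four branches of the claimed identity.

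The main obstacle is this final sign bookkeeping in the case $b<0$, where one must correctly combine the sign of $x^j$ with that of $x^{2j}+b^j$ and verify that the threshold $\sqrt{-b}$ and its endpoints land in the intended branch. Everything preceding that step is routine algebra once Lemma~\ref{lem3} has been invoked, and no use of the connection formulae from the earlier sections is required.
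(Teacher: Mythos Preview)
Your proof is correct and follows essentially the same route as the paper: invoke Lemma~\ref{lem3} to replace $\psi^{a,b}_{j}$ by $x^j-\tfrac{b^j}{x^j}$, complete the square under the inner radical to obtain $\tfrac{1}{2}\bigl|x^j+\tfrac{b^j}{x^j}\bigr|$, and then read off the value according to the sign of $x^j+\tfrac{b^j}{x^j}$. The paper's own proof stops at the expression $\sqrt[j]{\tfrac{1}{2}(x^j-\tfrac{b^j}{x^j})+\tfrac{1}{2}\bigl|x^j+\tfrac{b^j}{x^j}\bigr|}$ and simply asserts that the four branches follow; your explicit sign analysis of $S=(x^{2j}+b^j)/x^j$ fills in exactly the details the paper leaves to the reader, including the correct placement of the boundary points $\pm\sqrt{-b}$ in the $S\ge 0$ branch.
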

\begin{proof}
From Lemma \ref{lem3}, we have
$$\sqrt[j]{\frac{\psi^{a,b}_{j}(\frac{1}{a}(x-\frac{b}{x}))}{2}+
\sqrt{\dd\frac{\left(\psi^{a,b}_{j}(\frac{1}{a}(x-\frac{b}{x}))\right)^2}{4}+b^j}}
=\sqrt[j]{\frac{(x^j-\frac{b^j}{x^j})}{2}+\sqrt{\dd\frac{(x^j-\frac{b^j}{x^j})^2}{4}+b^j}}$$
$$=\sqrt[j]{\frac{(x^j-\frac{b^j}{x^j})}{2}+\sqrt{\dd\frac{(x^j+\frac{b^j}{x^j})^2}{4}}}
=\sqrt[j]{\frac{(x^j-\frac{b^j}{x^j})}{2}+\frac{|x^j+\frac{b^j}{x^j}|}{2}},$$
and consequently, relation \eqref{oddthm71} can be obtained.
    \end{proof}
The following results are direct consequences of Theorem \ref{thm7}.
\begin{cor}
\label{cor14Mod}
 If $j$ is an odd positive integer, then the following identities are true for Lucas,
Pell-Lucas, Fermat-Lucas, and Chebyshev polynomials of first kind:
   \begingroup
\allowdisplaybreaks
\begin{align*}
&\sqrt[j]{\frac{L_{j}(x-\frac{1}{x})}{2}+
\sqrt{\dd\frac{\left(L_{j}(x-\frac{1}{x})\right)^2}{4}+1}}
=\begin{cases}
     x,&x>0,\\
     \frac{-1}{x},&x<0,
    \end{cases}\\
 &\sqrt[j]{\frac{Q_{j}(\frac{1}{2}(x-\frac{1}{x}))}{2}+
\sqrt{\dd\frac{\left(Q_{j}(\frac{1}{2}(x-\frac{1}{x}))\right)^2}{4}+1}}
=\begin{cases}
     x,&x>0,\\
     \frac{-1}{x},&x<0,
    \end{cases}\\
    &\sqrt[j]{\frac{f_{j}(\frac{1}{3}(x+\frac{2}{x}))}{2}+
\sqrt{\dd\frac{\left(f_{j}(\frac{1}{3}(x+\frac{2}{x}))\right)^2}{4}-2^j}}
=\begin{cases}
     x,&x\in [\sqrt{2},\infty)\cup[-\sqrt{2},0),\\
     \frac{2}{x},&x\in (-\infty,-\sqrt{2})\cup(0,\sqrt{2}),
    \end{cases}\\
    &\sqrt[j]{T_{j}(\tfrac{1}{2}(x+\tfrac{1}{x}))+\sqrt{\left(T_{j}(\tfrac{1}{2}(x+\tfrac{1}{x}))\right)^2}-1}=\begin{cases}
     x,&x\in [1,\infty)\cup[-1,0),\\
     \frac{1}{x},&x\in (-\infty,-1)\cup(0,1).
    \end{cases}
\end{align*}
\endgroup
\end{cor}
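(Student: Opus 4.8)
The plan is to derive Corollary \ref{cor14Mod} directly as a specialization of Theorem \ref{thm7}, exploiting the special-case identifications of $\psi^{a,b}_{j}(x)$ collected in Table 1. The key observation is that Theorem \ref{thm7} involves the combination $\tfrac{1}{a}\!\left(x-\tfrac{b}{x}\right)$ as the argument of $\psi^{a,b}_{j}$, so the first step is, for each target polynomial, to read off the correct $(a,b)$ pair from the table and substitute it into \eqref{oddthm71}. For the Lucas case we take $(a,b)=(1,1)$, so that $\psi^{1,1}_{j}=L_{j}$ and the argument becomes $x-\tfrac{1}{x}$; for Pell--Lucas we take $(a,b)=(2,1)$, giving $\psi^{2,1}_{j}=Q_{j}$ and argument $\tfrac12(x-\tfrac1x)$; for Fermat--Lucas we take $(a,b)=(3,-2)$, giving $\psi^{3,-2}_{j}=f_{j}$ and argument $\tfrac13(x+\tfrac2x)$; and for the Chebyshev case we take $(a,b)=(2,-1)$, where $\psi^{2,-1}_{j}=2\,T_{j}$ so that the extra factor of $2$ must be tracked through the radical expression.

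First I would handle the two positive-$b$ rows (Lucas and Pell--Lucas). Here $b>0$, so only the first two branches of the case distinction in \eqref{oddthm71} are active, and they reproduce exactly the stated two-branch answers $x$ (for $x>0$) and $-b/x$ (for $x<0$), which specialize to $-1/x$ since $b=1$ in both rows. The only subtlety is the Pell--Lucas normalization: because $\psi^{2,1}_{j}=Q_{j}$ with no scalar factor, the substitution is completely clean. Next I would treat the Fermat--Lucas row, where now $b=-2<0$, so the last two branches of \eqref{oddthm71} govern the answer with $\sqrt{-b}=\sqrt{2}$; substituting $b=-2$ turns $-b/x$ into $2/x$ and yields precisely the stated intervals $[\sqrt{2},\infty)\cup[-\sqrt{2},0)$ versus $(-\infty,-\sqrt{2})\cup(0,\sqrt{2})$.

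The main obstacle, and the step I would carry out most carefully, is the Chebyshev row. Since $T_{j}=\tfrac12\psi^{2,-1}_{j}$, i.e. $\psi^{2,-1}_{j}=2\,T_{j}$, substituting $(a,b)=(2,-1)$ into \eqref{oddthm71} produces $\psi^{2,-1}_{j}(\cdot)/2 = T_{j}(\cdot)$, and the term $(\psi^{2,-1}_{j})^2/4 = T_{j}^2$ under the inner square root, together with $b^j=(-1)^j=-1$ (as $j$ is odd). This is exactly why the Chebyshev formula in the corollary is written without the outer factor of $1/2$ on $\psi$ and with the clean form $T_{j}(\cdot)+\sqrt{T_{j}^2(\cdot)-1}$; the factor-of-two from the table cancels the $1/2$'s in \eqref{oddthm71}. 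The argument becomes $\tfrac12(x+\tfrac1x)$ after substituting $b=-1$ into $\tfrac1a(x-\tfrac bx)=\tfrac12(x+\tfrac1x)$, and with $b=-1$ we have $\sqrt{-b}=1$, so the branch intervals are $[1,\infty)\cup[-1,0)$ and $(-\infty,-1)\cup(0,1)$, matching the statement. In each of the four rows the result then follows immediately once the substitution is verified, so no genuinely new calculation is required beyond the bookkeeping of the $(a,b)$ values and the Chebyshev normalization factor.
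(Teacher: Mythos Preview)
Your proposal is correct and follows exactly the route the paper takes: the paper states the corollary as a direct consequence of Theorem \ref{thm7}, and you carry this out by substituting the four $(a,b)$ pairs from Table 1 into \eqref{oddthm71}. Your handling of the Chebyshev normalization $T_{j}=\tfrac12\psi^{2,-1}_{j}$ is the only point requiring care, and you treat it correctly.
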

\begin{exa}
As an application of Corollary \ref{cor14Mod}, which is a corollary of Theorem \ref{thm7}, we readily get (for $j=5$ and $x=\frac{-1}{3}$), the following odd reduction formula
\[\sqrt[5]{\frac{-1889569}{486}+\sqrt{\frac{3570463447489}{236196}}}=
\sqrt[5]{\frac{f_{5}(\frac{-19}{9})}{2}+
\sqrt{\frac{(f_{5}(\frac{-19}{9}))^2}{4}-32}}=-\frac{1}{3}.\]
\end{exa}
 The following theorem exhibits the counterpart result of Theorem \ref{thm7} for even radicals.
     \begin{thm}
   \label{thm8}
Let $j$ be any positive even integer. Then for every nonzero real
number $x$, the following identity holds:
    \bq
    \label{oddre2}
    \sqrt[j]{\frac{\psi^{a,b}_{j}(\frac{1}{a}(x-\frac{b}{x}))}{2}
    +\sqrt{\dd\frac{\left(\psi^{a,b}_{j}(\frac{1}{a}(x-\frac{b}{x}))\right)^2}{4}-b^j}}
    =\begin{cases}
    |x|,&x\in{\mathbb{R}\setminus{(-\sqrt{|b|},\sqrt{|b|})}},\\
    |\frac{b}{x}|,&x\in(-\sqrt{|b|},\sqrt{|b|}).
    \end{cases}
    \eq
\end{thm}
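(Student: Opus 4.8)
The plan is to mirror the proof of Theorem~\ref{thm7}, using the even-index branch of Lemma~\ref{lem3} together with the elementary identity $\tfrac12\big((A+B)+|A-B|\big)=\max(A,B)$. First I would invoke Lemma~\ref{lem3}: since $j$ is even, $\psi^{a,b}_{j}\big(\tfrac{1}{a}(x-\tfrac{b}{x})\big)=x^{j}+\tfrac{b^{j}}{x^{j}}$ for every nonzero $x$. Writing $P:=x^{j}+\tfrac{b^{j}}{x^{j}}$ for brevity, the quantity under the inner square root collapses to a perfect square,
\[
\frac{P^{2}}{4}-b^{j}=\frac{1}{4}\Big(x^{j}+\tfrac{b^{j}}{x^{j}}\Big)^{2}-b^{j}=\frac{1}{4}\Big(x^{j}-\tfrac{b^{j}}{x^{j}}\Big)^{2},
\]
exactly as in Theorem~\ref{thm7} but with the roles of the two Binet terms exchanged (there the discriminant was $+b^{j}$ and Lemma~\ref{lem3} gave $x^{j}-b^{j}/x^{j}$).

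Taking the nonnegative square root yields $\sqrt{\tfrac{P^{2}}{4}-b^{j}}=\tfrac12\big|x^{j}-\tfrac{b^{j}}{x^{j}}\big|$, so the radicand of the outer $j$-th root becomes
\[
\frac{P}{2}+\sqrt{\frac{P^{2}}{4}-b^{j}}=\frac{1}{2}\Big(\big(x^{j}+\tfrac{b^{j}}{x^{j}}\big)+\big|x^{j}-\tfrac{b^{j}}{x^{j}}\big|\Big)=\max\!\Big(x^{j},\tfrac{b^{j}}{x^{j}}\Big).
\]
Here it is essential that $j$ is even, which forces both $x^{j}>0$ and $\tfrac{b^{j}}{x^{j}}>0$, so that the $\max$-identity applies to genuinely nonnegative arguments and the resulting maximum is strictly positive.

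Finally I would extract the $j$-th root. Again using that $j$ is even, $\sqrt[j]{x^{j}}=|x|$ and $\sqrt[j]{b^{j}/x^{j}}=|b/x|$, and since $t\mapsto t^{1/j}$ is increasing on $[0,\infty)$ it commutes with the maximum; hence the left-hand side equals $\max\big(|x|,\,|b/x|\big)$. The proof then closes with one case distinction: $|x|\ge|b/x|$ is equivalent to $x^{2}\ge|b|$, that is, to $x\in\mathbb{R}\setminus(-\sqrt{|b|},\sqrt{|b|})$, which gives the value $|x|$, while the complementary range $x\in(-\sqrt{|b|},\sqrt{|b|})$ gives $|b/x|$. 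The only point requiring care — and the step I would flag as substantive rather than routine — is tracking the absolute values produced by the even root and recognising the discriminant as the square of $x^{j}-b^{j}/x^{j}$ (not of $x^{j}+b^{j}/x^{j}$, as in the odd case), so that the $\max$ selects the branch matching the two stated cases.
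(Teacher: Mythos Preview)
Your proposal is correct and follows exactly the route the paper intends: the paper states Theorem~\ref{thm8} without proof as the even counterpart of Theorem~\ref{thm7}, and your argument is precisely the adaptation of that proof via the even branch of Lemma~\ref{lem3}, replacing $x^{j}-b^{j}/x^{j}$ by $x^{j}+b^{j}/x^{j}$ and recognising the inner discriminant as a perfect square. The use of the $\max$ identity $\tfrac12\big((A+B)+|A-B|\big)=\max(A,B)$ is a clean way to package the case split that the paper's Theorem~\ref{thm7} proof handles by direct inspection of the sign of the absolute value, but the substance is identical.
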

\begin{exa} The dedication for the paper \cite{berndt1997ramanujan}
reads: "In memory of Ramanujan on the
$$\sqrt[6]{32(\tfrac{146410001}{48400})^3-6(\tfrac{146410001}{48400})+
\sqrt{(32(\tfrac{146410001}{48400})^3-6(\tfrac{146410001}{48400}))^2-1}}
\quad  th $$ anniversary of his birth.\\
This radical was calculated by Osler in \cite{osler2001cardan} using
Cardan polynomials. In the following few lines, we will show that it
can be evaluated with the aid of Theorem \ref{thm8}. In fact, if we
note that
 \[32\left(\tfrac{146410001}{48400}\right)^3-6\left(\tfrac{146410001}{48400}\right)=
\frac{1}{2}\psi^{1,-1}_{3}\ \left((110)^2+\tfrac{1}{(110)^2}\right),\]
  then we have
  \begin{align*}
  &\sqrt[6]{32(\tfrac{146410001}{48400})^3-6(\tfrac{146410001}{48400})+
\sqrt{(32(\tfrac{146410001}{48400})^3-6(\tfrac{146410001}{48400}))^2-1}}=\\
&\sqrt[6]{\tfrac{\psi^{1,-1}_{3}\left((110)^2+\tfrac{1}{(110)^2}\right)}{2}+
\sqrt{\tfrac{\left(\psi^{1,-1}_{3}\left((110)^2+\tfrac{1}{(110)^2}\right)\right)^2}{4}-1}}=
\sqrt{(110)^2}=110.
  \end{align*}
\end{exa}
\section{Conclusions}
In this paper, we established several new connection formulae between some classes of polynomials generalizing the celebrated Fibonacci and Lucas polynomials. A few of these formulae generalize respective known ones. Two applications of the derived connection formulae were presented. A number of new expressions between the celebrated
Fibonacci, Pell, Fermat, Lucas, Pell-Lucas, and Fermat Lucas numbers were deduced. As a very important utilization of the two generalized Fibonacci and Lucas polynomials, new reduction formulae of certain odd and even radicals were given. We have deduced as special cases several reduction formulae of certain radicals involving Lucas, Pell Lucas, Fermat Lucas, Chebyshev and Dickson polynomials of the first and second kinds. Numerous examples are given to apply reduction of radicals. To the best of our knowledge, most of the formulae in this paper are new and we do believe that they are important and helpful.


\end{document}